\newcommand{\n}{\nabla}
\newcommand{\fk}{\frac{\mu_{t}^{k,N}}{\bar\mu_{t}^{\otimes k}}}
\newcommand{\norm}[1]{\left\lVert#1\right\rVert}
\newcommand{\mk}{\mu_{t}^{k,N}}
\newcommand{\mtk}{\bar\mu_{t}^{\otimes k}}
\keywords{Fisher information, propagation of chaos, sharp rates}
\subjclass[2020]{65C35; 35K55; 65C05; 82C22; 26D10; 60E15}
\author{Jules~Grass}
\address{Université Claude Bernard Lyon 1, CNRS UMR 5208, Institut Camille Jordan, 69622 Villeurbanne, France. \url{grass@math.univ-lyon1.fr}}
\author{Arnaud~Guillin}
\address{Laboratoire de Mathématique Blaise Pascal, CNRS UMR 6620, Université Clermont-Auvergne, avenue des Landais, F-63177 Aubière. \url{arnaud.guillin@uca.fr}}
\author{Christophe~Poquet}
\address{Université Claude Bernard Lyon 1, CNRS UMR 5208, Institut Camille Jordan, 69622 Villeurbanne, France. \url{poquet@math.univ-lyon1.fr}}
\title{Propagation of chaos in Fisher information}
\newtheorem{theorem}{Theorem}[section]
\newtheorem{lemma}[theorem]{Lemma}
\newtheorem{proposition}[theorem]{Proposition}
\newtheorem{hyp}[theorem]{Hypothesis}
\newtheorem{rem}[theorem]{Remark}
\newcommand{\cp}[1]{{\color{blue}#1}}
\begin{document}

\maketitle

\begin{abstract}
We present a new method for proving sharp local propagation of chaos in Fisher Information for particles in $\mathbb{R}^{d}$ with smooth interaction and drift. We rely on a new Lemma computing the Fisher Information of two diffusion processes with smooth drifts and fine estimates on the hessian of the law of the solution of the McKean-Vlasov equation. It allows us to obtain a new propagation of chaos in Fisher information, generalizing Lacker's seminal work \cite{lacker}, by using the BBGKY hierarchy to obtain a system of differential inequalities satisfied by both the relative entropy and the Fisher Information of $k$ particles. We also show with a simple Gaussian example that our decay rate is optimal.
\end{abstract}

\section{Introduction}
Fisher information sits at the crossroads of inference, analysis and kinetic theory: it controls fluctuations of densities, underpins de Bruijn–type relations connecting entropy production to information and refines classical information-theoretic inequalities such as Stam’s inequality, see for example the recent survey \cite{Vil25}. In diffusive mean-field (McKean–Vlasov) models, the Fokker–Planck structure makes the entropy–information dissipation explicit and places the dynamics within the Wasserstein gradient-flow framework, where log-Sobolev and transport inequalities \cite{OV00} turn information dissipation into quantitative stability \cite{CMV03}. Superadditivity/tensorization of Fisher information  properties \cite{Car91} further propagate regularity across coordinates, a feature that resonates with the chaoticity of many-particle systems. This perspective dovetails with the propagation-of-chaos program—from the foundational works and modern reviews to recent entropy-based, quantitative approaches—and with singular kinetic examples (e.g. 2D vortices) where uniform Fisher-information bounds yield entropic (and thus strong) chaos as in \cite{FHM14}. Very recently, Fisher information has been central to prove new striking results for Landau \cite{GS25} and Boltzmann \cite{ISV25} equations: monotonicity along the flow in the physical Coulombian case. In this paper we develop this line and prove propagation of chaos in Fisher information for a broad class of mean-field diffusions, leveraging  entropy–information flows across levels of the BBGKY hierarchy.

We consider here a system of $N$ diffusion processes with mean-field interaction, evolving in $\mathbb{R}^d$ (or the d-dimensional torus $\mathbb{T}^d$) according to the system of stochastic differential equations
\begin{equation*}
    {\rm d} X^{i,N}_t = F(X^{i,N}_t) {\rm d} t + \frac{1}{N}\sum_{j=1}^N \Gamma(X^{i,N}_t-X^{j,N}_t){\rm d}t+ \sqrt{2}{\rm d}B^i_t,
\end{equation*}
where $(B^i)_{i= 1\ldots N}$ is a family of independent standard Brownian motions and $\Gamma$ and $F$ are supposed smooth, with bounded derivatives, with $\Gamma$ moreover supposed bounded. It is well known that, for $k$ fixed and $N$ large, the distribution $\mu^{k,N}$ of $(X^{1,N}_t,\ldots X^{k,N}_t)$ is close to the distribution $\bar  \mu_{t}^{\otimes k}$ of $k$ independent processes $(\bar X^{1}_t,\ldots \bar X^{k}_t)$, with $\bar X^i_t$ solutions to
\begin{equation*}
    {\rm d} \bar X^{i}_t = F(\bar X^i_t){\rm d} t+  \Gamma*\bar \mu_t (\bar X^i_t){\rm d}t+ \sqrt{2}{\rm d}B_t,
\end{equation*}
with $B$ a standard Brownian motion. This convergence, known as {\sl propagation of chaos}, has been studied extensively since the second half of the 20th century due to its deep ties with the kinetic theory of gases. Several methods have been devised to prove propagation of chaos, starting with compactness arguments \cite{Snitzman}, \cite{Meleard} and coupling arguments \cite{Snitzman_topics}, \cite{McKean}, \cite{Eberle}. Entropy related methods have garnered a lot of attraction over the last decade, notably due to their ability to handle singular interactions, a notable case being the 2D Vortex model studied in \cite{Jabin_Wang}. It usually consists of estimating $H_{t}^{N} \coloneqq H\left(\mu_{t}^{N,N}|\mu_{t}^{\otimes N}\right)$ by a constant uniform in N and then relying on the subadditivity of the relative entropy to obtain $H_{t}^{k} \coloneqq H\left(\mu_{t}^{k,N}|\mu_{t}^{\otimes k}\right)=\mathcal{O}\left(\frac{k}{N}\right)$. It turns out that the subadditivity fails to yield the optimal rates of convergence. For a more detailed review, see \cite{Chaintron_1}, \cite{Chaintron_2}.

More recently, Lacker \cite{lacker} was able to obtain optimal rates of convergence of order $H_{t}^{k}=\mathcal{O} \left(\frac{k^{2}}{N^{2}} \right)$ for propagation of chaos under suitable assumptions on the interaction (that are satisfied if $\Gamma$ is bounded or Lipschitz for instance), by working directly with $H_{t}^{k}$ instead of $H_{t}^{N}$. The novelty of this approach is the use of the BBGKY hierarchy to obtain a system of differential inequalities of the form
\[ \frac{d}{dt} H_{t}^{k} \leq C \frac{k^{2}}{N^{2}}+C  k \left(H_{t}^{k+1}-H_{t}^{k} \right). \]
This approach has seen several improvements since this seminal work. Uniform in time propagation of chaos in entropy has been obtained in \cite{lacker_uniforme}, by relying on log-Sobolev inequalities to gain an additional $-c H_{t}^{k}$ on the right hand side of the system, and thus needing additional assumptions. The non exchangeable case has also been considered in \cite{lacker_graphe} where sharp convergence rates are obtained for systems of particles with weighted interaction (for instance, when the particles that interact are given by a deterministic matrix) toward $n$ independent processes (the so-called independent projection). See also \cite{Hess_Childs} for the $\chi^{2}$ divergence instead of the relative entropy, where sharp convergence rates are also obtained for higher order corrections of the mean-field limit.

A recent breakthrough for singular interactions has been made in \cite{Vortex}, that was done by obtaining a more general system of differential inequalities of the form
\[\frac{d}{dt} H_{t}^{k} \leq -c_{1} I_{t}^{k}+c_{2} I_{t}^{k+1}
{\bf 1}_{k<N}+C\frac{k^{2}}{N^{2}}+ C \ k \left(H_{t}^{k+1}-H_{t}^{k} \right){\bf 1}_{k<N}, \]
where $c_{1}>c_{2}$ and $\displaystyle I_{t}^{k}\coloneqq \sum_{i=1}^{k} \int \mu_{t}^{k,N} \norm{\nabla_{v_{i}} \log \frac{\mu_{t}^{k,N}}{\mu_{t}^{\otimes k}}}^{2} $. By using a change of variable, it is then possible to obtain a system of the form
\[\frac{d}{dt} z_{t}^{k} \leq C \frac{k^{2}}{N^{2}}+C  k \left(z_{t}^{k+1}-z_{t}^{k} \right){\bf 1}_{k<N}.\]

This approach has been used to treat the case of non constant diffusion coefficient by the authors in \cite{article_diffusion}, by using a new (in the context of propagation of chaos) decomposition of the Fisher Information, and it seems that a better understanding of $I_{t}^{k}$ could yield new results. A striking point in the change of variable in \cite{Vortex} is that it does not rely on any property of the Fisher information apart its positivity. In fact, little is known about $I_{t}^{k}$ and propagation of chaos in Fisher Information is very difficult to prove. To the best of our knowledge, very few papers consider propagation of chaos in Fisher information. In \cite{Hauray_Mischler}, it is proved that Fisher Information propagation of chaos is stronger than its entropic counterpart, but up to our reading no criterion to prove the Fisher propagation of chaos is given. For a quantitative version it can be established through a logarithmic Sobolev inequality for $\mu_t$ to control the entropy, and the $W_2$-distance. Note also that \cite{chewi} proves the static propagation of chaos (without sharp rates) in Fisher information, which is far simpler as  the measures are in this case explicit. The present article fills that gap in the literature. It also opens further developments that we will consider in the near future: singular interaction kernels (recall that it has been established for entropy with sharp rate in \cite{Vortex}, the uniform in time case, non constant diffusion... We hope also that it can lead to an
alternative approach to the monotonicity of the Fisher information in the Landau or Boltzmann equation, using this monotonicity for the particle system and then the propagation of chaos in Fisher information. Note finally that propagation of chaos brings new information wrt the one for entropy: it is well known that Fisher information controls difference of the scores $\log \mu_t^{k,N} -\log \mu_t^{\otimes k}$ (which may also be seen as an $H^1$ type control) and is far more sensible on the oscillation of the potentials -- consider on the torus $\mu_n= (1+\sin(nx)/n^{1/2})dx$ for which $H(\mu_n|dx)\to0$ whereas $I(\mu_n|dx)\to\infty$--.

One of our crucial tool is a new Lemma for estimating the Fisher Information between two diffusion processes with smooth drifts and constant noise and adapting Lacker's method (in the case of smooth interaction) we obtain a system of differential inequalities of the form
\[\frac{d}{dt}I_{t}^{k}  \leq C \frac{k^{2}}{N^{2}}+Ck\left(H_{t}^{k+1}-H_{t}^{k} \right){\bf 1}_{k<N}+Ck\left(I_{t}^{k+1}-I_{t}^{k} \right){\bf 1}_{k<N}+C \ I_{t}^{k},\]
which allows us to obtain the bound $I_{t}^{k}=\mathcal{O} \left(\frac{k^{2}}{N^{2}} \right)$.
We then prove from this inequality and its counterpart in entropy that the optimal rate of convergence is of order $\frac{k^{2}}{N^{2}}$, which means that our decay in $N$ is optimal.
The proof also relies on fine estimates on $\mu_{t}$ and its spacial derivatives by using Hamilton-Jacobi-Bellman equations. 

The plan of the paper is the following. Section \ref{MainRes} presents our main results concerning the propagation of chaos in Fisher information, as well as the crucial Lemma evaluating the derivative of the Fisher information between two solutions of inhomogeneous in time Fokker Planck equations with different drifts and constant diffusion, whose proof is given in Section \ref{ProofLem}. The proof of the main result using BBGKY hierarchy in Fisher information is done in Section \ref{ProofThm}. We present in Section \ref{Gauss} an explicit example, i.e. mean field Ornstein-Uhlenbeck process, where the Gaussian structure enables us to derive the exact rate of propagation of chaos in Fisher information, confirming the sharpness of the rate obtained in our main result. Finally Section \ref{Reg} includes all the fine regularity estimates necessary to justify the derivations of Section \ref{ProofLem} and \ref{ProofThm}.

\section{Main results}
\label{MainRes}
Let us first recall the definition of the relative entropy and Fisher information : for any probability measures $\mu$ and $\nu$ such that $\mu$ admits $f$ as density with respect to $\nu$, we define
\begin{equation*}
    H(\mu|\nu) = \int \nu f \log f,
\end{equation*}
and
\begin{equation*}
    I(\mu|\nu) = \int \nu f \Vert \nabla \log f\Vert^2 = \int \nu \frac{\Vert \nabla f\Vert^2}{f}.
\end{equation*}
We will make the following hypotheses for the initial conditions throughout the paper.

\begin{hyp}
\label{hyp-princ}
\begin{itemize}
    \item Torus Case. We suppose that $\bar \mu_0$ is $C^2$, positive and that  $\nabla^2 \log \bar\mu_0$ is bounded.
    \item $\mathbb{R}^d$ Case. We suppose initial Gaussian bounds. More precisely that there exist constants $c$, $C$, $c'$, $C'$ such that
    \begin{equation}\label{hyp:gaussian tail I}
    C^{-1}e^{-c^{-1}\norm{v}^2}\leq \mu^{N,N}_0(v)\leq Ce^{-c\norm{v}^2},\quad  C'^{-1}e^{-c'^{-1}\norm{v}^2}\leq \bar \mu_0(v)\leq C'e^{-c'\norm{v}^2},
    \end{equation}
    and that, for $j=1,2,3$,
    \begin{equation}
    \norm{\nabla^j \log \mu^{N,N}_0}^2 \leq C\left(1+ \left|\log \mu^{N,N}_{0}\right|^j\right), \quad \norm{\nabla^j \log \bar\mu_0}^2 \leq C'\left(1+ \left|\log \bar \mu_{0}\right|^j\right).
    \end{equation}
\end{itemize}
\end{hyp}
Remark that in the $\mathbb{R}^d$ case, the assumption is indeed verified for $\mu_0^{N,N}$ a Gaussian measure.

We may now state the main result of our paper.

\begin{theorem}\label{th:main}
Fix a $T>0$ and suppose that there exists a constant $C$ such that
\begin{equation*}
    H(\mu^{k,N}_0|\bar \mu_0^{\otimes k}) \leq C\frac{k^2}{N^2} \quad  \text{and} \quad I(\mu^{k,N}_0|\bar \mu_0^{\otimes k}) \leq C\frac{k^2}{N^2}.
\end{equation*}
Let distinguish two set of assumptions:
\begin{itemize}
    \item Torus Case. Suppose that $F$ and $\Gamma$ are smooth.
    \item $\mathbb{R}^d$ case. Suppose that $\Gamma$ is smooth bounded, with bounded derivatives, that $F$ is smooth with bounded derivatives and assume moreover that
    \begin{equation*}
    \sup_{t\in [0,T]}\Vert \nabla^2 \log \bar \mu_t\Vert <\infty.
\end{equation*}
\end{itemize}
We suppose here that Hypothesis \ref{hyp-princ} is effective.
Then there exists a constant $M_T$ 
such that
\begin{equation*}
    H(\mu^{k,N}_t|\bar \mu_t^{\otimes k})+I(\mu^{k,N}_t|\bar \mu_t^{\otimes k}) \leq M_T \frac{k^2}{N^2}.
\end{equation*}
\end{theorem}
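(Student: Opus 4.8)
The plan is to translate the problem, via the BBGKY hierarchy, into a coupled system of differential inequalities for the two families $H_t^k:=H(\mu_t^{k,N}|\bar\mu_t^{\otimes k})$ and $I_t^k:=I(\mu_t^{k,N}|\bar\mu_t^{\otimes k})$, $1\le k\le N$, and then to close that system by a comparison argument in the spirit of Lacker. First I would write down the Fokker--Planck equation solved by the marginal $\mu_t^{k,N}$: it is the non-autonomous equation $\partial_t\mu_t^{k,N}=\sum_{i=1}^k\big(-\n_i\cdot(b_i^{k,N}\mu_t^{k,N})+\tfrac12\Delta_i\mu_t^{k,N}\big)$, whose drift $b_i^{k,N}(x)=F(x_i)+\frac1N\sum_{j=1}^k\Gamma(x_i-x_j)+\frac{N-k}{N}\,\mathbb{E}[\Gamma(x_i-X_t^{k+1,N})\mid X_t^{1:k,N}=x]$ contains a conditional expectation coupling level $k$ to level $k+1$; the product $\bar\mu_t^{\otimes k}$ solves the same equation with the mean-field drift $F(x_i)+\Gamma*\bar\mu_t(x_i)$. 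I would also record the two elementary facts $H_t^k\le H_t^{k+1}$ and $I_t^k\le I_t^{k+1}$, which follow by marginalisation (data processing) since the reference measure is a product.

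At the \emph{entropy level} I would differentiate $H_t^k$ along the two flows. The usual computation gives the dissipation $-\tfrac12 I_t^k$ plus a cross term of the form $\int\mu_t^{k,N}\,\n\log\fk\cdot(b^{k,N}-\bar b)$; bounding it by Young's inequality (an $\varepsilon I_t^k$ being absorbed into the dissipation) and controlling the resulting $L^2$-norm of the drift discrepancy by Lacker's combinatorial estimate — which, through the chain rule $H_t^{k+1}-H_t^k=\int\mu_t^{k,N}\,H(\mu_t^{k+1,N}(\cdot\mid\cdot)\,|\,\bar\mu_t)\ge0$, brings in the increment $H_t^{k+1}-H_t^k$ — yields $\frac{d}{dt}H_t^k\le C\frac{k^2}{N^2}+Ck(H_t^{k+1}-H_t^k)$.

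The new ingredient is the \emph{Fisher level}. Here I would apply the Lemma of Section~\ref{ProofLem} to $\mu_t=\mu_t^{k,N}$ and $\nu_t=\bar\mu_t^{\otimes k}$: it writes $\frac{d}{dt}I_t^k$ as a negative Hessian dissipation $-c\int\mu_t^{k,N}\norm{\n^2\log\fk}^2$, plus a term $C\norm{\n^2\log\bar\mu_t}_\infty I_t^k$ coming from the (block-diagonal) Hessian of the reference, a term of the same type produced by $\n F$ and $\n\Gamma$, and source terms built from the drift discrepancy $b^{k,N}-\bar b$ and its first and second spatial derivatives. Absorbing those source terms into the Hessian dissipation by Young's inequality and estimating them through a Fisher-information analogue of Lacker's combinatorial lemma — which now additionally produces $I_t^{k+1}-I_t^k$, because differentiating the conditional-expectation part of $b^{k,N}$ brings down the score of $\mu_t^{k+1,N}$ — gives $\frac{d}{dt}I_t^k\le C\frac{k^2}{N^2}+Ck(H_t^{k+1}-H_t^k)+Ck(I_t^{k+1}-I_t^k)+C I_t^k$. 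All the differentiations under the integral sign and integrations by parts are legitimised by the regularity estimates of Section~\ref{Reg} (the Gaussian bounds and the bounds on $\n^j\log\mu_t^{N,N}$, $\n^j\log\bar\mu_t$ in Hypothesis~\ref{hyp-princ}), and it is precisely the assumption $\sup_{t\le T}\norm{\n^2\log\bar\mu_t}<\infty$ that keeps the coefficient of $I_t^k$ finite.

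Finally I would close the hierarchy. Setting $z_t^k:=H_t^k+I_t^k$ and adding the two systems, the inequality $I_t^{k+1}-I_t^k\ge0$ lets one recombine the telescoping terms into $Ck(z_t^{k+1}-z_t^k)$, and with $I_t^k\le z_t^k$ one gets $\frac{d}{dt}z_t^k\le C\frac{k^2}{N^2}+Ck(z_t^{k+1}-z_t^k)+C z_t^k$. The linear term is removed by passing to $w_t^k:=e^{-Ct}z_t^k$, which satisfies the same inequality without the last term; $w_t^k$ thus obeys Lacker's hierarchy with $w_0^k\le C k^2/N^2$ and top-level input $w_t^N\le z_t^N=H_t^N+I_t^N\le C_T$ uniformly in $N$ (the entropy part is the classical bound for bounded interaction, the Fisher part the $k=N$ instance of the previous step, where no coupling to level $k+1$ occurs and the source term is genuinely $O(1)$). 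Lacker's comparison lemma then propagates this bound down to $w_t^k\le M_T k^2/N^2$, and undoing the substitution gives the theorem. I expect the main obstacle to be the Fisher step: extracting exactly the structure $C k^2/N^2+Ck(H_t^{k+1}-H_t^k)+Ck(I_t^{k+1}-I_t^k)+C I_t^k$ — in particular the $I_t^{k+1}$ telescoping term — from the second derivatives of the conditional-expectation drift requires sharp control of the Hessians of $\mu_t^{k,N}$ and of $\bar\mu_t$; the companion difficulty, addressed by Hypothesis~\ref{hyp-princ} and Section~\ref{Reg}, is to make all the Fokker--Planck manipulations rigorous in the non-compact case.
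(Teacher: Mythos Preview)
Your proposal is correct and follows essentially the same route as the paper. The only (minor) difference is in the final closure: rather than discarding the entropy dissipation and then removing the residual $+Cz_t^k$ term via the substitution $w_t^k=e^{-Ct}z_t^k$, the paper keeps the $-I_t^k$ in the entropy inequality and forms the weighted combination $z_t^k=I_t^k+\alpha H_t^k$ with $\alpha$ large, so that $-\alpha I_t^k$ directly absorbs the $+CI_t^k$ coming from the Fisher inequality and Lacker's hierarchy $\frac{d}{dt}z_t^k\le C\frac{k^2}{N^2}+Ck(z_t^{k+1}-z_t^k)$ is reached without any further substitution.
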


\begin{rem}
\begin{itemize}
    \item We suppose for simplicity that all derivatives are bounded, but as can be seen in the proof, derivatives up to four may be sufficient, however leading to refined regularity estimates.
    \item The hypothesis $\sup_{t\in [0,T]}\Vert \nabla^2 \log \bar \mu_t\Vert <\infty$ is in fact trivially verified in the torus setting, if assumed at time $t=0$, for example via Feynman-Kac formula due to the ellipticity assumption and regularity of our coefficients. In the $\mathbb{R}^d$ case, we will rely on recent results on Hamilton-Jacobi-Bellman equation derived in \cite{bdd_hess}. It will be the purpose of Proposition \ref{LemDev2}. We prefer to state this as an assumption in our main Theorem as it will be a crucial step to extend our results on models with less regularity (vortex 2D, ...) or uniformly in time that will be considered in future works.
\end{itemize}
\end{rem}

The proof of our main result relies on the well known BBGKY hierarchy (which was already the crucial ingredient to obtain the optimal rates in \cite{lacker,lacker_uniforme,lacker_graphe,Vortex}) : $\mu^{k,N}_t$ is a solution to the PDE 
\begin{equation}\label{eq:mukNt}
    \partial_t \mu^{k,N}_t = \sum_{i=1}^k \Delta_{v_i} \mu^{k,N}_t -\sum_{i=1}^k \nabla_{v_i}\cdot \left( b^{i,k}_{1,t} \mu^{k,N}_t\right), 
\end{equation}
with
\begin{equation}\label{eq:def bi1}
b^{i,k}_{1,t}(v_1,\ldots,v_k) = F(v_i) + \frac{1}{N}\sum_{j=1}^k \Gamma(v_i-v_j) + \frac{N-k}{N}\int \Gamma(v_i-v_{k+1}) \mu^{k+1|k}_t(v_{k+1}|v_1,\ldots,v_k),
\end{equation}
where $\mu^{k+1|k}_t(v_{k+1}|v_1,\ldots,v_k)$ denotes the distribution of the $(k+1)$-th particle $X^{k+1,N}_t$ conditioned to $(X^{1,N}_t=v_1,\ldots, X^{k,N}_t=v_k)$.
To prove propagation of chaos, the idea is to compare $\mu^{k,N}_t$ to $\bar\mu^{\otimes k}_t$, which is a solution to the following PDE:
\begin{equation}\label{eq:mubarkt}
   \partial_t \bar\mu^{\otimes k}_t = \sum_{i=1}^k\Delta_{v_i} \bar\mu^{\otimes k}_t - \sum_{i=1}^k\nabla_{v_i}\cdot \left(b^{i}_{2,t} \bar\mu^{\otimes k}_t\right),  
\end{equation}
with
\begin{equation}\label{def:b2}
    b^{i}_{2,t}(v_i) = F(v_i) + \Gamma*\bar\mu_t (v_i).
\end{equation}
The main step of our proof is the following Lemma, 
that controls the evolution of the Fisher information of
two generic probability distributions $m_1$ and $m_2$ that are solutions to PDEs similar respectively to \eqref{eq:mukNt} and \eqref{eq:mubarkt}.

\begin{lemma}\label{lem: I(m1|m2)}
Suppose that $m_{1,t}$ and $m_{2,t}$ are respectively a solution to, for $u=1,2$,
\begin{equation}\label{eq: evolution m_u}
\partial_{t} m_{u,t}=\sum_{i=1}^k \Delta_{v_i} m_{u,t}-\sum_{i=1}^k\nabla_{v_i} \cdot \left(b^i_{u,t} m_{u,t} \right),
\end{equation}
that $m_{1,t}$ and $m_{2,t}$ are of $C^3$ regularity and that, for any $T>$, there exists constants $\kappa_{i,j}$, $\kappa'_{i,0}$, $i=1,2$, $j=0,1,2,3$ such that, for any $t\in [0,T]$,
\begin{equation}\label{hyp:gauss unif time}
  \kappa_{i,0}^{-1} -(\kappa_{i,0}')^{-1} \norm{v}^2 \leq \log m_{i,t}(v) \leq \kappa_{i,0} -\kappa_{i,0}' \norm{v}^2,  
\end{equation}
and
\begin{equation}\label{hyp:gassian tail II unif time}
    \norm{\nabla^j  m_{i,t}(v)} \leq \kappa_{i,j}\left(1+ \left|v\right|^j\right)m_{i,t}(v).
\end{equation}
 Suppose that $b^i_{u,t}$ is smooth, with bounded derivatives for any $u=1,2$ and $i=1 \dots k$ and moreover that $b^i_{1,t}-b^i_{2,t}$ is bounded for $i=1 \dots k$.  Denote $f_t=m_{1,t}/m_{2,t}$. We suppose that for any $T>0$,
\begin{equation*}
   \sup_{t\in [0,T]} \int  m_{2,t}\frac{\Vert \nabla^2 f_t\Vert^2}{f_t}<\infty, \quad \sup_{t\in [0,T]}\int  m_{2,t}\frac{\Vert\nabla^3 f_t\Vert^2}{f_t}<\infty, \quad \sup_{t\in [0,T]} \int m_{2,t}\frac{\Vert \nabla f_t\Vert^4}{f_t^3}<\infty,
\end{equation*}
\begin{equation*}
   \sup_{t\in [0,T]} \int  m_{1,t} \Vert \nabla^2 \log m_{2,t}\Vert^2 <\infty, \quad \sup_{t\in [0,T]} \int  m_{1,t} \Vert \nabla \log m_{2,t}\Vert^4 <\infty,
\end{equation*}
\begin{equation*}
   \sup_{t\in [0,T]  } \int m_{1,t}  \Vert b_{2,t} \Vert^4 <\infty.
\end{equation*}
Then, for any $\varepsilon>0$ and any $T>0$ there exists a constant $C$,  depending on $\varepsilon$,
such that for all $t\in [0,T]$,
\begin{align*}
\frac{d}{dt} I(m_{1,t}|m_{2,t}) &\leq -(2-\varepsilon)\sum_{i,j=1}^k  \int m_{2,t} f_t\left\Vert \nabla^2_{v_i,v_j} \log f\right\Vert^2  \\
&\quad +4\sum_{i,j=1}^k\int m_{2,t} f_t \ \nabla_{v_i} \log f_t \cdot \nabla^2_{v_i,v_j} \log m_{2,t} \nabla_{v_j} \log f_t
\\&
\quad -2 \sum_{i,j=1}^{k} \int m_{2,t} f_t \ \nabla_{v_{i}} \log f_t \cdot \nabla_{v_{i}} b_{2,t}^{j} \ \nabla_{v_{j}} \log f_t\\
&
\quad +2  \sum_{i,j=1}^{k} \int m_{2,t} \frac{\nabla_{v_{i}} f_t}{f_t} \cdot \nabla^2_{v_{i},v_{j}} \log m_{2,t}  \left(b_{2,t}^{j}-b_{1,t}^{j}\right) \ f_t 
\\ &\quad + C\sum_{i,j=1}^{k} \int m_{1,t} \norm{\nabla_{v_{i}} \left(b_{2,t}^{j}-b_{1,t}^{j} \right)}^{2}.
\end{align*}

\end{lemma}

\begin{rem}
\begin{itemize}
\item We have stated our result for two generic solutions of \ref{eq: evolution m_u}, but since we will be using Lemma \ref{lem: I(m1|m2)} with $m_{2,t}=\mu_{t}^{\otimes k}$, there will be additional cancellations. The main bound we use in Section \ref{th:main} is
\begin{align*}
\frac{d}{dt} I_{t}^{k} \leq C I_{t}^{k}+\sum_{i=1}^{k} \int \mu_{t}^{k,N} \ \norm{b_{2,t}^{i,k}-b_{1,t}^{i,k}}^{2}+\sum_{i,j=1}^{k} \int \mu_{t}^{k,N} \ \norm{\nabla_{v_{i}} \left(b_{2,t}^{j,k}-b_{1,t}^{j,k} \right)}^{2}
\end{align*}
\item For simplicity we have stated this result supposing that $b^i_{1,t}$ is smooth and bounded. So, while $m_2$ may be directly replaced by $\bar \mu^{\otimes k}_t$, it is not the case for $m_{1,t}$ and $\mu^{k,N}_t$ in the $\mathbb{R}^d$ setting: $b^{i,k}_{1,t}-b^{i,k}_{2,t}$ is indeed bounded but in \eqref{eq:def bi1} $b^{i,k}_{1,t}$ does not have its derivatives bounded since it involves the conditional distribution $\mu^{k+1|k}_t$. In Section~\ref{Reg} we will show that the computations made to prove this Lemma are still valid when $b^i_{1,t}$ is replaced by $b^{i,k}_{1,t}$ and that the other hypotheses of Lemma~\ref{lem: I(m1|m2)} are satisfied in the case we are interested in.
\end{itemize}
\end{rem}
Let us conclude this Section by a Proposition stating that, in the gradient case, our assumption on the Hessian of the logarithm of the nonlinear flow is satisfied under our assumptions. This Proposition is obtained by an application of a result of \cite{bdd_hess}. The details of the proof are given in Section~\ref{Reg}

\begin{proposition}
\label{LemDev2}
Suppose that there exists two functions $V, W$ such that $F=\nabla V$ and $\Gamma=\nabla W$ and suppose moreover that the derivatives of $F$ and $\Gamma$ are bounded, and that $\Gamma$ is bounded. Suppose that $\nabla^{2} \log \bar \mu_{0}$ is bounded. Then, for all $T \geq 0$, $\nabla^{2} \log \bar\mu_{t}$ is uniformly bounded on $[0,T]$.
\end{proposition}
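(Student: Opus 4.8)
The plan is to use the gradient structure to recast the nonlinear Fokker--Planck equation for $\bar\mu_t$ as a viscous Hamilton--Jacobi--Bellman equation for $-\log\bar\mu_t$, and then to quote the Hessian-propagation result of \cite{bdd_hess}. Set $\Phi_t := V + W*\bar\mu_t$, so that the McKean--Vlasov drift is $F + \Gamma*\bar\mu_t = \nabla\Phi_t$ and $\bar\mu_t$ solves the linear (in $\bar\mu_t$, time-inhomogeneous) equation $\partial_t\bar\mu_t = \Delta\bar\mu_t - \nabla\cdot(\nabla\Phi_t\,\bar\mu_t)$. The first step is to record regularity of $\Phi_t$: for $j\geq 2$ one has $\nabla^j\Phi_t = \nabla^{j-1}F + (\nabla^{j-1}\Gamma)*\bar\mu_t$, which is bounded uniformly on $[0,T]$ since $F$ has bounded derivatives and $\Gamma$ together with all its derivatives is bounded, while $\nabla\Phi_t = F + \Gamma*\bar\mu_t$ grows at most linearly. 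Crucially, these bounds do not involve $\nabla^2\log\bar\mu_t$, so although $\Phi_t$ depends on the unknown there is \emph{no circularity} in what follows.

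Second, using that $\bar\mu_t$ is smooth and strictly positive on $(0,T]$ --- standard parabolic regularity under the uniform ellipticity and smoothness of the coefficients, together with the Gaussian lower bound at $t=0$ from Hypothesis~\ref{hyp-princ} --- I would introduce $v_t := -\log\bar\mu_t$ and verify by a direct computation that
\[
\partial_t v_t = \Delta v_t - \Vert\nabla v_t\Vert^2 - \nabla\Phi_t\cdot\nabla v_t + \Delta\Phi_t .
\]
This is precisely a viscous Hamilton--Jacobi--Bellman equation with quadratic Hamiltonian of the sign treated in \cite{bdd_hess}, with transport coefficient $\nabla\Phi_t$ and source $\Delta\Phi_t$. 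Since $\nabla^2\log\bar\mu_t = -\nabla^2 v_t$, bounding $\nabla^2 v_t$ uniformly on $[0,T]$ is exactly the conclusion we seek.

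Third, I would apply the main theorem of \cite{bdd_hess}: since $\nabla^2 v_0 = -\nabla^2\log\bar\mu_0$ is bounded by assumption, and the structural data of the equation ($\nabla^2\Phi_t$, $\nabla^3\Phi_t$, $\nabla^2\Delta\Phi_t$, as well as the controlled growth of $\nabla\Phi_t$ and of $\nabla v_t$) are controlled on $[0,T]$, the two-sided bound on $\nabla^2 v_t$ propagates, giving $\sup_{t\in[0,T]}\Vert\nabla^2 v_t\Vert<\infty$. The mechanism behind this is a maximum-principle (Riccati) argument on the extreme eigenvalues of $\nabla^2 v_t$: differentiating the HJB equation twice in space produces for $W_t=\nabla^2 v_t$ a parabolic equation containing the strongly dissipative term $-2W_t^2$, which at a spatial maximum of the largest eigenvalue $\lambda_{\max}$ contributes $-2\lambda_{\max}^2$ and therefore dominates the terms that are only linear in $W_t$ (those generated by $\nabla^2\Phi_t$) and the bounded forcing generated by $\nabla^2\Delta\Phi_t$, so that $\lambda_{\max}$ stays below a time-uniform Riccati threshold; the argument for the lowest eigenvalue is analogous.

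The main obstacle is verifying the hypotheses of \cite{bdd_hess} in the non-compact $\mathbb{R}^d$ setting, specifically controlling the terms involving $\nabla\Phi_t$ and $\nabla v_t$, which grow linearly rather than being bounded; this is handled with the help of the Gaussian-type bounds on $\bar\mu_t$ and its logarithmic derivatives coming from Hypothesis~\ref{hyp-princ} and its propagation established in Section~\ref{Reg} (equivalently, via a weight or localization in the maximum-principle argument). In the torus case these difficulties disappear entirely, as noted after Theorem~\ref{th:main}. The only other point requiring care is the passage from the Fokker--Planck equation to the HJB equation for $v_t$, which is legitimate as soon as one has the a priori positivity and $C^{1,2}$ regularity of $\bar\mu_t$ recalled above.
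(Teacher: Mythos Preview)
Your approach is essentially the same as the paper's: rewrite the Fokker--Planck equation as an HJB equation for $-\log\bar\mu_t$ and invoke the Hessian-propagation result of \cite{bdd_hess}, after checking that the drift potential $\Phi_t$ (the paper's $U_t$) has the required regularity. The paper carries out two verifications you leave implicit---that $\nabla(\partial_t\Phi_t)$ is Lipschitz (computed by plugging the PDE for $\bar\mu_t$ into $\partial_t(\Gamma*\bar\mu_t)$) and that the one-sided Lipschitz quantity $\kappa_{\mathcal{A}_t+V_t}$ is bounded below---and then points to the specific Corollary~2.5 and auxiliary function $f_t(r)$ in \cite{bdd_hess}; these are exactly the technical checks your outline would naturally fill in.
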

Note that here we do not have assumptions on the ergodicity of our processes as only growth assumptions are made on the drift and interaction terms and therefore no uniform in time bound can be obtained in Prop. \ref{LemDev2}. Note however that in Section \ref{Gauss}, when $a$ is positive and $a+2b$, as densities are gaussian, it is easy to verify that $\nabla^{2} \log \bar\mu_{t}$ is uniformly bounded in time and space. It will be a crucial result to prove for uniform in time propagation in Fisher information. 

\section{Proof of Lemma~\ref{lem: I(m1|m2)}}
\label{ProofLem}

Recall that we have defined $f_t=m_{1,t}/m_{2,t}$. We start by decomposing the terms involved in the dynamics of $f_t$. For simplicity of notations we drop the dependency in time, and write $f$, $m_1$, $m_2$ instead of $f_t$, $m_{1,t}$, $m_{2,t}$... Recalling \eqref{eq: evolution m_u} we obtain
\begin{align*}
\partial_{t} f & = \frac{\partial_{t} m_{1}}{m_{2}}-\frac{\partial_{t} m_{2}}{m_{2}^{2}} \ m_{1}
\\ 
&=\underbrace{\sum_{i=1}^{k}-\frac{\nabla_{v_{i}} \cdot \left(b_{1}^{i} m_{1} \right)}{m_{2}}+\nabla_{v_{i}} \cdot \left(b_{2}^{i} m_{2} \right) \ \frac{m_{1}}{m_{2}^{2}}}_{=: \mathcal{L}_{1}  f}
+\underbrace{\sum_{i=1}^{k} \ f \ \left( \frac{\Delta_{v_{i}} m_{1}}{m_{2}}-\frac{\Delta_{v_{i}} m_{2}}{m_{2}^{2}} \ m_{1} \right)}_{=: \mathcal{L}_{2} f}.
\end{align*}
On one hand, focusing on the operator $\mathcal{L}_1$, we have
\begin{align*}
\mathcal{L}_{1} f& =\sum_{i=1}^{k} -\nabla_{v_{i}} \cdot b_{1}^{i} \ f-b_{1}^{i} \cdot \frac{\nabla_{v_{i}} m_{1}}{m_{2}}+\nabla_{v_{i}} \cdot b_{2}^{i} \ f+   b_{2}^{i} \cdot \nabla_{v_{i}} \log m_{2}\ f
\\ 
&=\sum_{i=1}^{k} \nabla_{v_{i}} \cdot \left(b_{2}^{i}-b_{1}^{i} \right) \ f-b_{1}^{i} \cdot \left(\frac{\nabla_{v_{i}} m_{1}}{m_{2}}-\nabla_{v_{i}} m_{2} \frac{m_{1}}{m_{2}^{2}} \right)+\left(b_{2}^{i}-b_{1}^{i} \right) \cdot \nabla_{v_{i}} \log m_{2} \ f
\\
& = \sum_{i=1}^{k} \nabla_{v_{i}} \cdot \left( b_{2}^{i}-b_{1}^{i} \right) \ f-b_{1}^{i} \cdot \nabla_{v_{i}} f+\left(b_{2}^{i}-b_{1}^{i} \right)\cdot \nabla_{v_{i}} \log m_{2} \ f,
\end{align*}
and thus we obtain
\begin{equation}\label{eq: cL1}
    \mathcal{L}_{1} = \sum_{i=1}^{k} \nabla_{v_{i}} \cdot \left(\left( b_{2}^{i}-b_{1}^{i} \right) \ f\right)-  b^i_2\cdot \nabla_{v_i}f +\left(b_{2}^{i}-b_{1}^{i} \right)\cdot \nabla_{v_{i}} \log m_{2} \ f.
\end{equation}
On the other hand, by standard computations,
\begin{align*}
\nabla_{v_{i}} \frac{m_{1}}{m_{2}}=\frac{\nabla_{v_{i}} m_{1}}{m_{2}}-\nabla_{v_{i}} m_{2} \frac{m_{1}}{m_{2}^{2}},
\end{align*}
and
\begin{align*}
\Delta_{v_{i}} \frac{m_{1}}{m_{2}}&=\nabla_{v_{i}} \cdot \nabla_{v_{i}} \frac{m_{1}}{m_{2}}
\\ &
=\nabla_{v_{i}} \cdot \left(\frac{\nabla_{v_{i}} m_{1}}{m_{2}}-\nabla_{v_{i}} m_{2} \frac{m_{1}}{m_{2}^{2}}\right)
\\ &
=\frac{\Delta_{v_{i}} m_{1}}{m_{2}}-\nabla_{v_{i}} m_{1} \cdot \frac{\nabla_{v_{i}} m_{2}}{m_{2}^{2}}-\Delta_{v_{i}} m_{2} \ \frac{m_{1}}{m_{2}^{2}}-\nabla_{v_{i}} m_{1} \cdot \frac{\nabla_{v_{i}} m_{2}}{m_{2}^{2}}+2 \ \nabla_{v_{i}} m_{2} \cdot \nabla_{v_{i}} m_{2} \ \frac{m_{1}}{m_{2}^{3}},
\end{align*}
which leads to
\begin{align*}
\sum_{i=1}^{k} \Delta_{v_{i}} \ f& =\mathcal{L}_{2} f+ 2 \ \sum_{i=1}^{k} \nabla_{v_{i}} m_{2} \cdot \left(\nabla_{v_{i}} m_{2} \ \frac{m_{1}}{m_{2}^{3}}-\frac{\nabla_{v_{i}} m_{1}}{m_{2}^{2}} \right)
\\ &
=\mathcal{L}_{2} f+2 \ \sum_{i=1}^{k} \nabla_{v_{i}} \log m_{2} \cdot \left(\nabla_{v_{i}} m_{2} \frac{m_{1}}{m_{2}^{2}}-\sum_{i=1}^{k} \frac{\nabla_{v_{i}} m_{1}}{m_{2}} \right)
\\ &
=\mathcal{L}_{2} f -2 \ \sum_{i=1}^{k} \nabla_{v_{i}} \log m_{2} \cdot \nabla_{v_{i}} f.
\end{align*}
Therefore:
\begin{equation}\label{eq: cL2}
\mathcal{L}_{2} f=\sum_{i=1}^{k} \Delta_{v_{i}} f+ 2 \ \n_{v_{i}} \log m_{2} \cdot \n_{v_{i}} f.
\end{equation}

\medskip

We can now focus on the evolution of the Fisher Information.
Remark that if we define
\[
I^R(m_1|m_2) = \sum_{i=1}^k  \int K_R m_2\frac{\Vert \nabla_{v_i} f\Vert^2}{f},
\]
where $K_R(v)$ has value $1$ if $|v|\leq R$ and value $0$ id $|v|\geq R$, then we can differentiate with respect to time inside the integral in $I^R(m_1|m_2)$, and write
\[
I^R(m_{1,t}|m_{2,t})-I^R(m_{1,0}|m_{2,0}) = \int_0^t \frac{d}{ds}I^R(m_{1,s}|m_{2,s}) {\rm d} s.
\]
Remark moreover that one can apply the monotone convergence Theorem on the left-hand side. Moreover one can show via dominated convergence Theorem that each term appearing in $\frac{d}{ds}I^R(m_{1,s}|m_{2,s})$ admits a limit (the domination, uniform in $R$ and $s\in [0,T]$, is given by the bound of each term provided at the end of the proof), and defining
\[
\frac{d}{ds} I(m_{1,s}|m_{2,s}) = \lim_{R\rightarrow \infty} \frac{d}{ds} I^R(m_{1,s}|m_{2,s}),
\]
applying again the dominated convergence Theorem we get
\[
I(m_{1,t}|m_{2,t})-I(m_{1,0}|m_{2,0}) = \int_0^t \frac{d}{ds}I(m_{1,s}|m_{2,s}) {\rm d} s,
\]
with $ \frac{d}{ds}I(m_{1,s}|m_{2,s})  \in L^\infty([0,T])$. More precisely,
\begin{align}
&\frac{d}{dt} I(m_{1}|m_{2}) \nonumber\\
&= \sum_{i=1}^{k} \int \partial_{t} m_{2} \ \frac{\norm{\n_{v_{i}} f}^{2}}{f}+2 \ \int \frac{m_{2}}{f} \ \n_{v_{i}} \left( \partial_{t} f \right) \cdot \n_{v_{i}} f-\int m_{2} \ \norm{\n_{v_{i}} f}^{2} \ \frac{\partial_{t} f}{f^{2}}
\nonumber\\ &
=\underbrace{-\sum_{i=1}^{k} \int \sum_{j=1}^k\nabla_{v_{j}} \cdot \left(b^j_{2} m_{2} \right) \frac{\norm{\nabla_{v_{i}} f}^{2}}{f}+2 \ \int \frac{m_{2}}{f} \  \nabla_{v_{i}} \left(\mathcal{L}_{1} f\right) \cdot \nabla_{v_{i}} f-\int m_{2} \ \norm{\nabla_{v_{i}} f}^{2} \ 
\frac{\mathcal{L}_{1} f}{f^{2}}}_{=: I_{1}}
\nonumber\\ &
\quad +\underbrace{\sum_{i=1}^{k} \int \sum_{j=1}^k\Delta_{v_{j}} m_{2} \frac{\norm{\nabla_{v_{i}} f}^{2}}{f}+2 \ \int \frac{m_{2}}{f} \  \nabla_{v_{i}} \left(\mathcal{L}_{2} f\right) \cdot \nabla_{v_{i}} f-\int m_{2} \ \norm{\nabla_{v_{i}} f}^{2} \ 
\frac{\mathcal{L}_{2} f}{f^{2}}}_{=: I_{2}}.\label{eq: dyn I(m1,m2)}
\end{align}
In this decomposition $I_2$ corresponds to the effect of the diffusion, while $I_1$ corresponds to the effects of the drift. We will treat these two effects separately in the two following subsections, the proof of Lemma~\ref{lem: I(m1|m2)} being a direct consequence of \eqref{eq: expression I2} and \eqref{eq: bound I1}.

\subsection{Dissipation of Fisher information: the diffusion term}

The aim of this subsection is to obtain an adequate expression of term $I_2$. This calculation is inspired by \cite[Part 2, Chap 4, p.280]{Vil98} or \cite[Lem. 4.2]{Tos99}, which focus on the flat Fisher information. Recalling \eqref{eq: dyn I(m1,m2)} and \eqref{eq: cL2} we obtain
\begin{align*}
I_{2} =\sum_{i,j=1}^{k}& \underbrace{\int \Delta_{v_{i}} m_{2} \frac{\norm{\nabla_{v_{j}} f}^{2}}{f}}_{=: A^{i,j}}+ \underbrace{2 \int \frac{m_{2}}{f} \  \nabla_{v_{i}} \left(\Delta_{v_j} f\right) \cdot \nabla_{v_{i}} f}_{=:B^{i,j}}\\
&+ \underbrace{4\int \frac{m_{2}}{f} \  \nabla_{v_{j}} \left(\nabla_{v_i} \log m_2 \cdot \nabla_{v_i}f\right) \cdot \nabla_{v_{j}} f}_{=:C^{i,j}}\
\underbrace{-\int m_{2} \ \norm{\nabla_{v_{j}} f}^{2} \ 
\frac{\Delta_{v_i} f}{f^{2}}}_{=:D^{i,j}}\\
&\underbrace{-2\int m_{2} f\ \left\Vert\nabla_{v_{j}} \log f\right\Vert^{2} \ 
\nabla_{v_i} \log m_2 \cdot \nabla_{v_i}\log f}_{=:E^{i,j}}.
\end{align*}
Relying on integration by parts, we obtain
\begin{align*}
    B^{i,j} &= 2 \sum_{\alpha,\beta=1}^d \int \frac{m_2}{f} \frac{\partial^3 f}{\partial^2 {v_{i,\alpha}}\partial {v_{j,\beta}}}\frac{\partial f}{\partial {v_{j,\beta}}}\\
    &= -2\sum_{\alpha,\beta=1}^d \int \frac{\partial^2 f}{\partial {v_{i,\alpha}}\partial {v_{j,\beta}}}\frac{\partial}{\partial {v_{i,\alpha}}}\left(\frac{m_2}{f}\frac{\partial f}{\partial {v_{j,\beta}}}\right)\\
    &= -2 \int m_2 \nabla_{v_i} \log m_2 \cdot \nabla^2_{v_i,v_j} f\ \nabla_{v_j} \log f - 2 \int \frac{m_2}{f}\left\Vert \nabla^2_{v_i,v_j} f\right\Vert^2 \\
    &\quad + 2 \int m_2 \nabla_{v_i} \log f \cdot \nabla^2_{v_i,v_j} f\ \nabla_{v_j} \log f,
\end{align*}
where $\nabla^2_{v_i,v_j}f$ is the matrix $\displaystyle \left(\frac{\partial^2 f}{\partial v_{i,\alpha}\partial v_{j,\beta}} \right)_{1\leq \alpha,\beta\leq d}$, and
\begin{align*}
    D^{i,j} &= -  \sum_{\alpha,\beta=1}^d \int \frac{m_2}{f^2} \frac{\partial^2 f}{\partial v_{i,\alpha}^2}\left(\frac{\partial f}{\partial v_{j,\beta}}\right)^2 \\
    &=\sum_{\alpha,\beta=1}^d \int  \frac{\partial f}{\partial v_{i,\alpha}}\frac{\partial}{\partial v_{i,\alpha}}\left(\frac{m_2}{f^2}\left(\frac{\partial f}{\partial v_{j,\alpha}}\right)^2\right)\\
    &= \int m_2 f \left\Vert \nabla_{v_j}\log f \right\Vert^2 \nabla_{v_i} \log m_2 \cdot \nabla_{v_i}  \log f + 2\int m_2 \nabla_{v_i} \log f \cdot \nabla^2_{v_i,v_j} f\  \nabla_{v_j} \log f\\
    &\quad -2\int m_2 f \left\Vert \nabla_{v_i}\log f \right\Vert^2 \left\Vert \nabla_{v_j}\log f \right\Vert^2.
\end{align*}
These two computations lead to
\begin{align*}
    B^{i,j}+D^{i,j} &=- 2 \int \frac{m_2}{f}\left\Vert \nabla^2_{v_i,v_j} f\right\Vert^2+ 4\int m_2 \nabla_{v_i} \log f \cdot \nabla^2_{v_i,v_j} f\  \nabla_{v_j} \log f\\
    &\quad -2\int m_2 f \left\Vert \nabla_{v_i}\log f \right\Vert^2 \left\Vert \nabla_{v_j}\log f \right\Vert^2\\
    &\quad -2 \int m_2 \nabla_{v_i} \log m_2 \cdot \nabla^2_{v_i,v_j} f\ \nabla_{v_j} \log f+ \int m_2 f \left\Vert \nabla_{v_j}\log f \right\Vert^2 \nabla_{v_i} \log m_2 \cdot \nabla_{v_i}  \log f \\
    & = -2\int m_2 f\left\Vert \nabla^2_{v_i,v_j} \log f\right\Vert^2 \underbrace{-2 \int m_2 \nabla_{v_i} \log m_2 \cdot \nabla^2_{v_i,v_j} f\ \nabla_{v_j} \log f}_{=:F^{i,j}}-\frac{E^{i,j}}{2},
\end{align*}
and thus, remarking that
\begin{align*}
   \frac{\left\Vert \nabla^2_{v_i,v_j} f\right\Vert^2}{f^2} -& 2 \frac{\nabla_{v_i} \log f \cdot \nabla^2_{v_i,v_j} f\  \nabla_{v_j} \log f}{f}+ \left\Vert \nabla_{v_i}\log f \right\Vert^2 \left\Vert \nabla_{v_j}\log f \right\Vert^2\\
   & = \sum_{\alpha,\beta=1}^d \left(\frac{1}{f}\frac{\partial^2 f}{\partial v_{i,\alpha}\partial v_{j,\beta}}\right)^2- \frac{2}{f}\frac{\partial^2 f}{\partial v_{i,\alpha}\partial v_{j,\beta}}\frac{1}{f^2}\frac{\partial f}{\partial v_{i,\alpha}}\frac{\partial f}{\partial v_{j,\beta}}+\left(\frac{1}{f^2}\frac{\partial f}{\partial v_{i,\alpha}}\frac{\partial f}{\partial v_{j,\beta}}\right)^2\\
   &=\sum_{\alpha,\beta=1}^d \left( \frac{1}{f}\frac{\partial^2 f}{\partial v_{i,\alpha}\partial v_{j,\beta}}-\frac{1}{f^2}\frac{\partial f}{\partial v_{i,\alpha}}\frac{\partial f}{\partial v_{j,\beta}}\right)^2\\
   &=\left\Vert \nabla^2_{v_i,v_j} \log f\right\Vert^2,
\end{align*}
we deduce
\begin{align*}
    I_2 =\sum_{i,j=1}^k -2\int m_2 f\left\Vert \nabla^2_{v_i,v_j} \log f\right\Vert^2 + A^{i,j}+C^{i,j}+\frac{E^{i,j}}{2}+F^{i,j}.
\end{align*}
Now, relying on an integral by part,
\begin{align*}
    F^{i,j} &= -2\sum_{\alpha,\beta=1}^d\int \frac{\partial m_2}{\partial v_{i,\alpha}}\frac{\partial^2 f}{\partial v_{i,\alpha}\partial v_{j,\beta}}\frac{\partial \log f}{\partial v_{j,\beta}}\\
    &=2\sum_{\alpha,\beta=1}^d\int \frac{\partial f}{\partial v_{j,\beta}}\frac{\partial}{\partial v_{i,\alpha}}\left(\frac{\partial m_2}{\partial v_{i,\alpha}}\frac{1}{f}\frac{\partial f}{\partial v_{j,\beta}}\right)\\
    &=2A^{i,j}+E^{i,j}-F^{i,j},
\end{align*}
so that
\[
F^{i,j} = A^{i,j}+\frac{E^{i,j}}{2},
\]
and, expanding,
\begin{align*}
    C^{i,j}& = 4\int m_2 f \ \nabla_{v_i} \log f \cdot \nabla^2_{v_i,v_j} \log m_2 \nabla_{v_j} \log f-2 F^{i,j}.
\end{align*}
So, gathering these two last identities,
\begin{align*}
   A^{i,j}+C^{i,j}+\frac{E^{i,j}}{2}+F^{i,j}& = 4\int m_2 f\  \nabla_{v_i} \log f \cdot \nabla^2_{v_i,v_j} \log m_2 \nabla_{v_j} \log f +A^{i,j}+\frac{E^{i,j}}{2}-F^{i,j}\\ 
   &= 4\int m_2 f\  \nabla_{v_i} \log f \cdot \nabla^2_{v_i,v_j} \log m_2 \nabla_{v_j} \log f,
\end{align*}
and finally
\begin{align}\label{eq: expression I2}
    I_2 =\sum_{i,j=1}^k -2\int m_2 f\left\Vert \nabla^2_{v_i,v_j} \log f\right\Vert^2 +4\int m_2 f\  \nabla_{v_i} \log f \cdot \nabla^2_{v_i,v_j} \log m_2 \nabla_{v_j} \log f.
\end{align}

\subsection{Dissipation of Fisher information: the drift term}

We focus in this subsection on the term $I_1$. Recalling \eqref{eq: dyn I(m1,m2)}, we get the decomposition
\begin{align*}
I_{1}=\sum_{i=1}^{k}\left(\sum_{j=1}^k \underbrace{-\int \nabla_{v_{j}} \cdot \left(b_{2}^{j,k} m_{2} \right) \frac{\norm{\nabla_{v_{i}} f}^{2}}{f}}_{=:G^{i,j}}+\underbrace{2 \int \frac{m_{2}}{f} \  \nabla_{v_{i}} \left(\mathcal{L}_{1} f\right) \cdot \nabla_{v_{i}} f}_{=:H^{i}}\ \underbrace{ -\int m_{2} \ \norm{\nabla_{v_{i}} f}^{2} \ 
\frac{\mathcal{L}_{1} f}{f^{2}}}_{= :J^{i}}\right).
\end{align*}
We will bound each term, making appear some crucial cancellations in the process. Let us first focus on $H^i$. Relying on \eqref{eq: cL1},
\begin{align*}
H^i=  \sum_{j=1}^{k}& \underbrace{2 \int \frac{m_{2}}{f} \ \nabla_{v_{i}} f \cdot \nabla_{v_{i}} \left(\nabla_{v_{j}} \cdot \left(\left(b_{2}^{j}-b_{1}^{j} \right) f\right)  \right)}_{=: H^{i,j}_{1}}\ \underbrace{ -2 \int \frac{m_2}{f}\nabla_{v_i} f \cdot \nabla_{v_i} \left(b_{2}^{j}\cdot \nabla_{v_j}  f\right)}_{=:H^{i,j}_2}
\\ &
+ \underbrace{2  \int m_{2} \frac{\nabla_{v_{i}} f}{f} \cdot \nabla_{v_{i}} \left(\left(b_{2}^{j}-b_{1}^{j}\right) \cdot \nabla_{v_{j}} \log m_{2}  f\right)}_{=:H^{i,j}_{3}}.
\end{align*}
We have, after an integration by parts:
\begin{align*}
H^{i,j}_{1}& =2\sum_{\alpha,\beta=1}^d \int \frac{m_2}{f}\frac{\partial f}{\partial v_{i,\alpha}}\frac{\partial}{\partial v_{i,\alpha}}\left(\frac{\partial }{\partial v_{j,\beta}}\left(\left(b^{j,\beta}_2-b^{j,\beta}_1\right)f\right)\right)
\\ 
&=-2 \sum_{\alpha,\beta=1}^d \int \frac{\partial}{\partial v_{j,\beta}}\left( \frac{m_2}{f}\frac{\partial f}{\partial v_{i,\alpha}}\right)\frac{\partial }{\partial v_{i,\alpha}}\left(\left(b^{j,\beta}_2-b^{j,\beta}_1\right)f\right)\\ 
&=\underbrace{-2\int m_2  f \nabla_{v_i} \log f \cdot \nabla_{v_i}\left(b_{2}^{j}-b_{1}^{j}\right) \nabla_{v_j}\log m_2}_{=:H^{i,j}_4}\\
&\quad \underbrace{-2\int m_2 f \left\Vert \nabla_{v_i} \log f\right\Vert^2 \left(b_{2}^{j}-b_{1}^{j}\right)\nabla_{v_j}\log m_2}_{=:H^{i,j}_5} \\
&\quad \underbrace{- 2\int m_2 f  \nabla^2_{v_i,v_j} \log f :\nabla_{v_{i}}\left(b^{j}_2-b^{j}_1\right)}_{=:H^{i,j}_6}\ \underbrace{- 2\int m_2 f \nabla_{v_i} \log f  \cdot\nabla^2_{v_i,v_j} \log f  \left(b^{j}_2-b^{j}_1\right) }_{=:H^{i,j}_7},
\end{align*}
where $A : B$ denotes the Frobenius inner product between two square matrices $A$ and $B$. Relying on the inequality $x \cdot y \leq \varepsilon \norm{x}^{2}+\frac{1}{4 \varepsilon} \norm{y}^{2}$,
\begin{align}\label{eq: bound H6}
   \sum_{i,j=1}^k  H^{i,j}_6 \leq C\sum_{i,j=1}^k \int m_1 \left\Vert \nabla_{v_i}\left(b_{2}^{j}-b_{1}^{j}\right)\right\Vert^2+\varepsilon \sum_{i,j=1}^k \int m_1 \left\Vert \nabla^2_{v_i,v_j} \log f\right\Vert^2,
\end{align}
The terms $H_{7}^{i,j}$,$H^{i,j}_4$ and $H^{i,j}_5$ will be cancelled with terms appearing in the next computation, while a cancellation will also occur for $H^{i,j}_2$ at the end of the proof.
Let us now focus on $H^{i,j}_{3}$. 
We have, by simple computations,
\begin{align*}
H^{i,j}_{3}&
=\underbrace{2\int m_{2} \frac{\nabla_{v_{i}} f}{f} \cdot \left(\nabla_{v_{i}} \left(b_{2}^{j}-b_{1}^{j}\right)\nabla_{v_{j}} \log m_{2}\right) f}_{=:H^{i,j}_{8}}
\\
&\quad +\underbrace{2  \int m_{2} \frac{\nabla_{v_{i}} f}{f} \cdot \nabla^2_{v_{i},v_{j}} \log m_{2}  \left(b_{2}^{j}-b_{1}^{j}\right) \ f}_{=: H^{i,j}_{9}}
\\ &
\quad +\underbrace{2 \int m_{2} \frac{\nabla_{v_{i}} f}{f}  \cdot \nabla_{v_{i}} f \left(b_{2}^{j}-b_{1}^{j}\right) \cdot \nabla_{v_{j}} \log m_{2}}_{=:H^{i,j}_{10}}.
\end{align*}
We remark immediately that
\[
H^{i,j}_4+H^{i,j}_8=0  \text{ and } H^{i,j}_5+ H^{i,j}_{10}=0.
\]
$H^{i,j}_{9}$ is a term that appears in our Lemma~\ref{lem: I(m1|m2)}.
We will now focus on the term $J^{i}$.
We have, recalling the expression of $\mathcal{L}_{1}f$,
\begin{align*}
J^i = \sum_{j=1}^{k}&\underbrace{ - \int m_{2} \frac{\norm{\nabla_{v_{i}} f}^{2}}{f^{2}} \nabla_{v_{j}} \cdot \left(\left(b_{2}^{j}-b_{1}^{j} \right) f\right)}_{=: J^{i,j}_{1}}+\underbrace{  \int m_{2} \frac{\norm{\nabla_{v_{i}} f}^{2}}{f^{2}} b_{2}^{j} \cdot \nabla_{v_{j}}  f}_{=: J^{i,j}_{2}}
\\ &
\underbrace{-\int m_{2} \frac{\norm{\nabla_{v_{i}} f}^{2}}{f^{2}} \left(b_{2}^{j}-b_{1}^{j}\right) \cdot \nabla_{v_{j}} \log m_{2} \ f}_{=: J^{i,j}_{3}}.
\end{align*}
Remark first that an integration by part leads to
\begin{align*}
    J^{i,j}_{1}& = -\int m_{2} \frac{\norm{\nabla_{v_{i}} f}^{2}}{f^{2}} \nabla_{v_{j}} \cdot \left(b_{2}^{j}-b_{1}^{j} \right)f-\int m_{2} \frac{\norm{\nabla_{v_{i}} f}^{2}}{f^{2}}  \left(b_{2}^{j}-b_{1}^{j} \right)\cdot \nabla_{v_{j}}  f\\ 
    & = \int  \nabla_{v_j}\left( m_{2}\frac{\norm{\nabla_{v_{i}} f}^{2}}{f^{}} \right)\cdot \left(b_{2}^{j}-b_{1}^{j} \right)f-\int m_{2} \frac{\norm{\nabla_{v_{i}} f}^{2}}{f^{2}}  \left(b_{2}^{j}-b_{1}^{j} \right)\cdot \nabla_{v_{j}}  f\\ 
    & = \underbrace{\int m_2 f \left\Vert \nabla_{v_i}\log f\right\Vert^2\nabla_{v_j} \log m_2 \cdot \left(b_{2}^{j}-b_{1}^{j}\right)}_{=:J^{i,j}_4}\\
    &\quad +\underbrace{2\int m_2 \frac{\nabla^2_{v_j,v_i} f \nabla_{v_i} f}{f}\cdot \left(b_{2}^{j}-b_{1}^{j}\right)-2 \int m_{2} \frac{\norm{\nabla_{v_{i}} f}^{2}}{f^{2}}  \left(b_{2}^{j}-b_{1}^{j} \right)\cdot \nabla_{v_{j}}  f}_{=:J^{i,j}_5},
\end{align*}
and note that
\[
J^{i,j}_3+J^{i,j}_4=0.
\]
Moreover, 
\begin{align*}
J^{i,j}_{5}=2 \int m_{2}\  \left(b_{2}^{j}-b_{1}^{j} \right)\cdot \left( \frac{\nabla^{2}_{v_{j},v_{i}} f \ \nabla_{v_{i}} f}{f}
-  \frac{\norm{\nabla_{v_{i}} f}^{2} }{f^{2}}\  \nabla_{v_{j}} f \right).
\end{align*}
However, by standard computation we know that
\begin{align*}
\nabla^{2}_{v_{j},v_{i}} \log f& =\frac{\nabla^{2}_{v_{j},v_{i}} f}{f}-\frac{\nabla_{v_{j}} f \otimes \nabla_{v_{i}} f}{f^{2}},
\end{align*}
so that
\begin{align*}
\nabla^{2}_{v_{j},v_{i}} \log f \ \nabla_{v_{i}} f=\frac{\nabla^{2}_{v_{i},v_{j}} f \ \nabla_{v_{i}} f}{f}
-\frac{\norm{\nabla_{v_{i}} f}^{2}}{f^{2}} \ \nabla_{v_{j}} f.
\end{align*}
Therefore:
\begin{align*}
 J^{i,j}_{5}& =2 \int m_{2} \ \left(( b_{2}^{j}-b_{1}^{j}\right) \cdot \nabla^{2}_{v_{j},v_{i}} \log f \ \nabla_{v_{i}} f,
\end{align*}
which implies $J_{5}^{i,j}+H_{7}^{i,j}=0$.

At last, it remains to remark that an integration by parts leads to
\begin{align*}
    G^{i,j} = 2\int \frac{m_2}{f}  b_{2}^{j}\cdot \nabla^2_{v_j,v_i} f \nabla_{v_i} f-J^{i,j}_2,
\end{align*}
while a straightforward computation gives
\begin{align*}
    H^{i,j}_2 = - 2 \int \frac{m_2}{f}  b_{2}^{j}\cdot \nabla^2_{v_j,v_i} f \nabla_{v_i} f - 2\int m_2 f \nabla_{v_i}\log f \cdot \nabla_{v_i} b_2^j \nabla_{v_j} \log f,
\end{align*}
and
\begin{align}\label{eq: bound G H J}
  \sum_{i,j=1}^k G^{i,j}+ H^{i,j}_2+J^{i,j}_2 = -2 \sum_{i,j=1}^{k} \int m_{2} f \ \nabla_{v_{i}} \log f \cdot \nabla_{v_{i}} b_{2}^{j} \ \nabla_{v_{j}} \log f.
\end{align}
Finally, gathering the estimates \eqref{eq: bound H6}, and \eqref{eq: bound G H J} we obtain, for all $\varepsilon >0$,
\begin{align}\label{eq: bound I1}
  \nonumber  I_1 & \leq 2  \sum_{i,j=1}^{k} \int m_{2} \frac{\nabla_{v_{i}} f}{f} \cdot \nabla^2_{v_{i},v_{j}} \log m_{2}  \left(b_{2}^{j}-b_{1}^{j}\right) \ f + C \sum_{i,j=1}^{k} \int m_{1} \norm{\nabla_{v_{i}} \left(b_{2}^{j}-b_{1}^{j} \right)}^{2}
    \\ &
   \quad -2 \sum_{i,j=1}^{k} \int m_{2} f \ \nabla_{v_{i}} \log f \cdot \nabla_{v_{i}} b_{2}^{j} \ \nabla_{v_{j}} \log f+ \varepsilon \sum_{i,j=1}^k \int m_1 \left\Vert \nabla^2_{v_i,v_j} \log f\right\Vert^2.
\end{align}

We conclude by showing that with our hypotheses all the terms appearing in this proof are well defined. For example,
\begin{equation*}
\left\vert A^{i,j}\right\vert \leq C \int \left(m_1\Vert \Delta \log m_2\Vert^2 + m_1 \Vert \nabla \log m_2\Vert^4+ m_2\frac{\norm{\nabla f}^4}{f^3}\right),
\end{equation*}
\begin{equation*}
    \left|G_{i,j}\right| \leq C\int\left( \left\Vert \nabla_{v_j}\cdot b_{2}^{j} \right\Vert_\infty m_2 \frac{\norm{\nabla f}^2}{f} + m_1 \norm{b_2}^4 +  m_1 \norm{\nabla \log m_2}^4 + m_2\frac{\norm{\nabla f}^4}{f^3}\right),
\end{equation*}
or
\begin{align*}
    \left|H_1^{i,j}\right| &\leq C\int \Bigg( \left\Vert \nabla^2 \left(b_{1}^{j}- b_{2}^{j}\right) \right\Vert_\infty 
    \left(m_1+m_2\frac{\norm{\nabla f}^2}{f} \right)+ \left\Vert \nabla \left(b_{1}^{j}-b_{2}^{j}\right) \right\Vert_\infty m_2\frac{\norm{\nabla f}^2}{f}\\
    &\qquad \qquad + 2 \left\Vert  b_{1}^{j}-  b_{2}^{j} \right\Vert_\infty \left(m_2 \frac{\norm{\nabla f}^2}{f} + m_2 \frac{\norm{\nabla^2 f}^2}{f} \right)\Bigg).
\end{align*}
The other terms can be treated in the same way.

However, in order to prove Theorem~\ref{th:main}, we will consider terms $b_1$ and $b_2$ such that, while $b_1-b_2$ and the derivatives of $b_2$ will still be bounded, the derivatives of $b_1$ will not be bounded (recall \eqref{eq:mukNt} and \eqref{eq:def bi1}). We will prove in Proposition~\ref{prop:derivatives b1} that, when replacing $b^j_1$ by $b^{j,k}_1$, the term appearing in the proof above involving the derivatives of $b^{j,k}_1$ are still well-defined.

\section{Proof of Theorem~\ref{th:main}}
\label{ProofThm}

We first start by choosing $\mu_{0}$ and $\mu_{0}^{k,N}$ such that $\mu_{0}^{\otimes N}$ and $\mu_{0}^{N,N}$ satisfy the assumption of Theorem~\ref{th:main} so that, in virtue of Proposition~\ref{prop:derivatives b1}, we can follow the proof of Lemma~\ref{lem: I(m1|m2)}, all the integration by parts we need being justified. By using Lemma~\ref{lem: I(m1|m2)}, we have
\begin{align*}
\frac{d}{dt} I_{t}^{k} &\leq C  I_{t}^{k}-(2-\varepsilon)\sum_{i,j=1}^k  \int \mk \left\Vert \nabla^2_{v_i,v_j} \log \fk \right\Vert^2  \\
&\quad +4\sum_{i,j=1}^k\int \mk \ \nabla_{v_i} \log \fk \cdot \nabla^2_{v_i,v_j} \log \mtk \nabla_{v_j} \log \fk
\\
&
\quad +2  \sum_{i,j=1}^{k} \int \mk \nabla_{v_{i}} \log \fk \cdot \nabla^2_{v_{i},v_{j}} \log \mtk \left(b_{2,t}^{j,k}-b_{1,t}^{j,k}\right) \\
&\quad + C \sum_{i,j=1}^{k} \int \mk \norm{\nabla_{v_{i}} \left(b_{2,t}^{j,k}-b_{1,t}^{j,k} \right)}^{2}
\\ &
\quad -2 \sum_{i,j=1}^{k} \int \mk \ \nabla_{v_{i}} \log \fk \cdot \nabla_{v_{i}} b_{2,t}^{j,k} \ \nabla_{v_{j}} \log \fk.
\end{align*}
Note that, because we used Lemma~\ref{lem: I(m1|m2)} with $m_{2}=\mtk$, there are additional cancellation. Indeed, because the particles are independent, we have, if $i \neq j$, $\nabla^{2}_{v_{i},v_{j}} \log \mtk=0$ and $\nabla_{v_{i}} b_{2}^{j,k}=0$. Therefore, we have, because $\nabla^{2}_{v_{i},v_{i}} \log \mtk$ is bounded by a constant (see Proposition~\ref{LemDev2}),
\begin{align*}
&\sum_{i,j=1}^k\int \mk \ \nabla_{v_i} \log \fk \cdot \nabla^2_{v_i,v_j} \log \mtk \nabla_{v_j} \log \fk\\ &\qquad\qquad\qquad\qquad=\sum_{i=1}^{k} \int \mk \ \nabla_{v_i} \log \fk \cdot \nabla^2_{v_i,v_i} \log \mtk \nabla_{v_j} \log \fk 
\\ &\qquad\qquad\qquad\qquad
\leq C I_{t}^{k},
\end{align*}
and 
\begin{align*}
\sum_{i,j=1}^{k} \int \mk \nabla_{v_{i}} \log \fk \cdot \nabla^2_{v_{i},v_{j}} \log \mtk \left(b_{2,t}^{j,k}-b_{1,t}^{j,k}\right) &
\leq C I_{t}^{k}+C \sum_{i=1}^{k} \int \mk \norm{b_{2,t}^{i,k}-b_{1,t}^{i,k}}^{2}.
\end{align*}
Because $\nabla \Gamma$ and $\nabla F$ are bounded, we also have
\begin{align*}
\left|\sum_{i,j=1}^{k} \int \mk \ \nabla_{v_{i}} \log \fk \cdot \nabla_{v_{i}} b_{2}^{j,k} \ \nabla_{v_{j}} \log \fk \right| \leq C I_{t}^{k}.
\end{align*}
Therefore, for $\varepsilon$ small enough, we have
\begin{align*}
\frac{d}{dt} I_{t}^{k} & \leq  
C   I_{t}^{k} +C  \underbrace{\sum_{i=1}^{k} \int \mk \ \norm{b_{2,t}^{i,k}-b_{1,t}^{i,k}}^{2}}_{\coloneq K}
+C\underbrace{   \sum_{i,j=1}^{k} \int \mk \ \norm{\nabla_{v_{i}} \left( b_{2,t}^{j,k}-b_{1,t}^{j,k} \right)}^{2}}_{\coloneq L}.
\end{align*}
By using the expressions of $b_{1,t}^{i,k}$ and $b_{2,t}^{i,k}$, the terms involving $F$ cancel out and we have:
\begin{align*}
K=\sum_{i=1}^{k} \int \mk \norm{\frac{1}{N} \sum_{j=1}^{k}\left( \Gamma(v_{i}-v_{j})-\Gamma\ast \bar\mu_{t}(v_{i}) \right)+\frac{N-k}{N} \langle \Gamma(v_{i}-\cdot), \mu_{t}^{k+1|k}-\bar\mu_{t} \rangle}^{2}.
\end{align*}
To bound the terms $K$ we follow here similar arguments as in \cite{lacker}: we get the decomposition (remark that if $k=N$ the second term is not present):
\begin{align*}
    K&\leq \underbrace{\frac{2}{N^2}\sum_{i=1}^k \int \mk \norm{\sum_{j=1}^{k}\left( \Gamma(v_{i}-v_{j})-\Gamma\ast \bar\mu_{t}(v_{i}) \right)}^2}_{:=K_1} + \underbrace{2 \sum_{i=1}^k\int \mk \norm{ \langle \Gamma(v_{i}-\cdot), \mu_{t}^{k+1|k}-\bar\mu_{t} \rangle}^2}_{:=K_2},
\end{align*}
and Pinsker inequality leads to
\[
\norm{ \langle \Gamma(v_{i}-\cdot), \mu_{t}^{k+1|k}(\cdot|v_1,\ldots v_k)-\bar\mu_{t}(\cdot) \rangle}\leq \norm{\Gamma}_\infty \sqrt{H\left(\mu_{t}^{k+1|k}(\cdot|v_1,\ldots v_k)|\bar\mu_{t}(\cdot)\right)},
\]
so, relying on the towering property of the relative entropy, we obtain the bound
\[
K_2 \leq Ck  \left(H_{t}^{k+1}-H_{t}^{k} \right){\bf 1}_{k<N}.
\]
Concerning $K_1$, we have, relying on the exchangeability of the particles,
\begin{align*}
    K_1 \leq 4\frac{k^2}{N^2} \norm{\Gamma}_\infty + 4\frac{k^3}{N^2}\int \mu^{3, N}_t \left(\Gamma(v_1-v_2)-\Gamma*\bar\mu_t(v_1)\right)\left(\Gamma(v_1-v_3)-\Gamma*\bar\mu_t(v_1)\right),
\end{align*}
so that, remarking that
\[
\int \bar \mu^{\otimes 3}_t \left(\Gamma(v_1-v_2)-\Gamma*\bar\mu_t(v_1)\right)\left(\Gamma(v_1-v_3)-\Gamma*\bar\mu_t(v_1)\right)=0,
\]
relying again on Pinsker inequality and the fact that $\Gamma$ is bounded we get
\[
K_1 \leq C\frac{k^2}{N^2} + C\frac{k^3}{N^2}\sqrt{H\left(\mu^{3,N}_t|\bar \mu^{\otimes 3}_t\right)}.
\]
Applying the relative entropy bound given by \cite{lacker} we get $H\left(\mu^{3,N}_t|\bar \mu^{\otimes 3}_t\right)\leq \frac{C}{N^2}$, so that, gathering the previous estimates,
\[
K \leq C\frac{k^2}{N^2} + C k (H^{k+1}_t-H^k_t){\bf 1}_{k<N}.
\]
We focus now on the term $L$. Notice that, because $b_{2,t}^{j,k}(v_{1},...,v_{k})=\Gamma \ast \mu_{t}(v_{j})+F(v_{j})$, we have, if $i \neq j$, $\nabla_{v_{i}} b_{2,t}^{j,k}=0$. Therefore, we have:
\begin{align*}
L& =\underbrace{\sum_{j=1}^{k} \int \mk \ \norm{\nabla_{v_{j}} \left( b_{2,t}^{j,k}-b_{1,t}^{j,k} \right)}^{2}}_{=L_1}
+\underbrace{\sum_{i \neq j} \int \mk \ \norm{\nabla_{v_{i}} \left( b_{2,t}^{j,k}-b_{1,t}^{j,k} \right)}^{2}}_{= L_{2}}.
\end{align*}
Notice moreover that, if $i \neq j$, we have
\[
\nabla_{v_{i}} \left( b^{j,k}_2-b_{1,t}^{j,k}\right)=-\frac{1}{N} \nabla \Gamma(v_{j}-v_{i})-\frac{N-k}{N} \langle \Gamma(v_{j}-\cdot), \nabla_{v_{i}} \mu_{t}^{k+1|k} \rangle.
\]
Therefore, relying on the fact that $\Gamma$ is bounded (again, if $k=N$, the second term is not present of the right-hand side),
\begin{align*}
L_{2} & \leq 2  \sum_{i \neq j} \int \mk \norm{\frac{1}{N} \nabla \Gamma(v_{j}-v_{i})}^{2}+2 \ \sum_{i \neq j} \int \mk \norm{\frac{N-k}{N} \langle \Gamma(v_{j}-\cdot), \nabla_{v_{i}} \mu_{t}^{k+1|k} \rangle}^{2}
\\ & 
\leq C \frac{k^{2}}{N^{2}}+C \sum_{i,j=1}^{k} \int \mk \norm{\langle \Gamma(v_{j}-\cdot), \nabla_{v_{i}} \mu_{t}^{k+1|k}}^{2}
\\ &
\leq C \frac{k^{2}}{N^{2}}+ C  \sum_{i,j=1}^{k} \int \mk \norm{\int \mu_{t}^{k+1|k}(dv^{*}) \ \Gamma(v_{i}-v^{*}) \cdot \nabla_{v_{i}} \log \mu_{t}^{k+1|k} }^{2}
\\ &
\leq C \frac{k^{2}}{N^{2}}+ C  \sum_{i,j=1}^{k}\int \mk \int \mu_{t}^{k+1|k}(dv^{*}) \ \norm{ \Gamma(v_{i}-v^{*}) \cdot \nabla_{v_{i}} \log \mu_{t}^{k+1|k}}^{2}
\\ &
\leq C \frac{k^{2}}{N^{2}}+ C \sum_{i,j=1}^{k} \int \mu_{t}^{k+1,N} \ \norm{ \nabla_{v_{i}} \log \mu_{t}^{k+1|k}}^{2}.
\end{align*}
Notice moreover that, by exchangeability, we have
\begin{align*}
I_{t}^{k+1}& =\sum_{i=1}^{k+1} \int \mu_{t}^{k+1,N} \norm{\nabla_{v_{i}} \log \frac{\mu_{t}^{k+1,N}}{\mu^{\otimes k+1}}}^{2}
\\ &
=\frac{1}{k+1} I_{t}^{k+1}+\sum_{i=1}^{k} \int \mu_{t}^{k+1,N} \norm{\nabla_{v_{i}} \log \fk+ \nabla_{v_{i}} \log \frac{\mu_{t}^{k+1|k}}{\bar\mu_{t}(v_{k+1})}}^{2},
\end{align*}
and thus, since $\nabla_{v_{i}} \left(\log \mu(v_{k+1}) \right)=0$,
\begin{align*}
I_{t}^{k+1}& =\frac{1}{k+1} I_{t}^{k+1}+\sum_{i=1}^{k} \int \mu_{t}^{k+1,N} \norm{\nabla_{v_{i}} \log \fk+ \nabla_{v_{i}} \log \mu_{t}^{k+1|k}}^{2}
\\ &
=\frac{1}{k+1} I_{t}^{k+1}+\sum_{i=1}^{k} \int \mu_{t}^{k+1,N} \norm{\nabla_{v_{i}} \log \fk}^{2}+\sum_{i=1}^{k} \int \mu_{t}^{k+1,N}  \norm{\nabla_{v_{i}} \log \mu_{t}^{k+1|k}}^{2}
\\ &\quad  +
2 \ \sum_{i=1}^{k} \int \mu_{t}^{k+1,N} \nabla_{v_{i}} \log \mu_{t}^{k+1|k} \cdot \nabla_{v_{i}} \log \fk
\\ &
=\frac{1}{k+1} I_{t}^{k+1}+I_{t}^{k}+\sum_{i=1}^{k} \int \mu_{t}^{k+1,N}  \norm{\nabla_{v_{i}} \log \mu_{t}^{k+1|k}}^{2}\\ 
&\quad +2 \ \sum_{i=1}^{k} \int \mu^{k,N} \left(\int \mu_{t}^{k+1|k} \nabla_{v_{i}} \log \mu_{t}^{k+1|k} \right) \cdot \nabla_{v_{i}} \log \fk.
\end{align*}
Noticing that
\begin{align*}
\int \mu_{t}^{k+1|k} \nabla_{v_{i}} \log \mu_{t}^{k+1|k}& =\int \nabla_{v_{i}} \mu_{t}^{k+1|k}
\\ &
=\nabla_{v_{i}} \int \mu_{t}^{k+1|k}
\\ &
=0,
\end{align*}
we deduce
\begin{align*}
I_{t}^{k+1}-I_{t}^{k} \geq \sum_{i=1}^{k} \int \mu_{t}^{k+1,N}  \norm{\nabla_{v_{i}} \log \mu_{t}^{k+1|k}}^{2}.
\end{align*}
Using this estimate yields the bound 
\begin{align*}
L_{2} \leq C  \frac{k^{2}}{N^{2}}+Ck \left(I_{t}^{k+1}-I_{t}^{k} \right){\bf 1}_{k<N}.
\end{align*}
It remains to bound $L_1$:
\begin{align*}
L_1 & =C \sum_{j=1}^{k} \int \mk \norm{\nabla \Gamma \ast \mu_{t}(v_{j})-\frac{1}{N} \sum_{i=1}^{k} \nabla \Gamma(v_{j}-v_{i})-\frac{N-k}{N} \nabla_{v_{j}} \langle \Gamma(v_{j}-\cdot), \mu_{t}^{k+1|k} \rangle}^{2}
\\ &
=C \sum_{j=1}^{k} \int \mk \left\|\frac{1}{N} \sum_{i=1}^{k} \nabla \Gamma(v_{j}-v_{i})-\nabla \Gamma\ast \mu_{t}(v_{j})+\frac{N-k}{N} \langle \nabla \Gamma(v_{j}-\cdot), \mu_{t}-\mu_{t}^{k+1|k} \rangle\right.\\
&\qquad\qquad\qquad\qquad\qquad \left.-\frac{N-k}{N} \langle \Gamma(v_{j}-\cdot, \nabla_{v_{j}} \mu_{t}^{k+1|k} \rangle\right\|^{2}
\\ &
\leq C \sum_{j=1}^{k} \int \mk \norm{\frac{1}{N} \sum_{i=1}^{k} \nabla \Gamma(v_{j}-v_{i})-\nabla \Gamma\ast \mu_{t}(v_{j})+\frac{N-k}{N} \langle \nabla \Gamma(v_{j}-\cdot), \mu_{t}-\mu_{t}^{k+1|k} \rangle}^{2}
\\ &
 \quad +C\sum_{j=1}^{k} \int \mk \norm{\langle \Gamma(v_{j}-\cdot), \nabla_{v_{j}} \log \mu_{t}^{k+1|k} \rangle}^{2}.
\end{align*}
By similar arguments as before, we get the bound
\begin{align*}
L_2\leq C \frac{k^{2}}{N^{2}}+Ck\left(H_{t}^{k+1}-H_{t}^{k} \right){\bf 1}_{k<N}+C\left(I_{t}^{k+1}-I_{t}^{k} \right){\bf 1}_{k<N}.
\end{align*}
Finally, gathering the previous estimates, we obtain
\begin{align*}
\frac{d}{dt}I_{t}^{k} & \leq C \frac{k^{2}}{N^{2}}+Ck\left(H_{t}^{k+1}-H_{t}^{k} \right){\bf 1}_{k<N}+Ck\left(I_{t}^{k+1}-I_{t}^{k} \right){\bf 1}_{k<N}+C \ I_{t}^{k}.
\end{align*}
To obtain a bound on $I_{t}^{k}$ we also consider the hierarchy provided by the relative entropy, as was done in \cite{lacker, lacker_uniforme}: standard computations (see for example \cite[Lemma 3.1.]{lacker_uniforme}) give
\[
\frac{d}{dt}H_{t}^{k}=- I_{t}^{k}+\sum_{j=1}^k \int \mu^{k,N}_t \norm{b^{j,k}_2 - b^{j,k}_1}^2,
\]
so, with similar estimates as before (remark in particular that with our hypotheses $b^{j,k}_2 - b^{j,k}_1$ is bounded) we obtain
\begin{align*}
\frac{d}{dt}H_{t}^{k}\leq - I_{t}^{k}+C \ \frac{k^{2}}{N^{2}}+Ck  \left(H_{t}^{k+1}-H_{t}^{k} \right){\bf 1}_{k<N}.
\end{align*}
Let $\alpha>0$ and $z_{t}^{k} \coloneqq I_{t}^{k}+\alpha H_{t}^{k}$. We obtain, for $\alpha$ large enough:
\begin{align*}
\frac{d}{dt} z_{t}^{k} & \leq C \frac{k^{2}}{N^{2}}+ \left(C-\alpha \right) I_{t}^{k}+C k \left(H_{t}^{k+1}-H_{t}^{k} \right){\bf 1}_{k<N}+C k \left(I_{t}^{k+1}-I_{t}^{k} \right){\bf 1}_{k<N}.
\\ & 
\leq C\frac{k^{2}}{N^{2}}+Ck\left(z_{t}^{k+1}-z_{t}^{k} \right){\bf 1}_{k<N}.
\end{align*}
These are exactly the same equations as obtained \cite{lacker}, in the context of the relative entropy. Relying on the estimates obtained in this paper, we deduce that
\[ \forall t \in \mathbb{R}_{+},  \ z_{t}^{k} \leq Ce^{Ct} \frac{k^{2}}{N^{2}}.\]
This means in particular that
\[ I_{t}^{k} \leq Ce^{Ct} \frac{k^{2}}{N^{2}}.\]

\section{A Gaussian example: optimality of our rates}
\label{Gauss}

The main goal of this Section is to write down an explicit example where the rate we obtained previously in our main Theorem is shown to be sharp. It will be a Gaussian case, i.e. linear confinement and interaction potential. Note that this case lies beyond our general setting, i.e. $\Gamma$ bounded. Consider constants $a$ and $b$ and the system of $n$ interacting particles in $\mathbb{R}$ of the form
\begin{align*}
dX_{t}^{i}=-\left(a X_{t}^{i}+\frac{b}{n-1} \sum_{j \neq i} X_{t}^{j} \right)dt+dW_{t}^{i}.
\end{align*}
We define
\[
v(t) \coloneqq \frac{1}{2a} \left(1-e^{-2at} \right), \qquad c_{n}(t)=\frac{1}{n} \left(1-\frac{a}{a+\frac{bn}{n-1}} \frac{1-e^{-2t \left(a+\frac{bn}{n-1} \right) }}{1-e^{-2at}} \right).
\]
The $k$ particles marginal $\mu_{t}^{k,n}$ is a centered gaussian with covariance matrix $\Sigma_{t}^{n,k}$ satisfying $\Sigma_{t}^{n,k}=v(t)\left(I_{k}-c_{n}(t)J_{k} \right)$, where $I_k$ is the identity matrix and $J^k$ is the matrix of all ones. $\mu_{t}^{k,n}$ converges to $\mu_{t}^{ \otimes k}$, where $\mu_{t}$ is a centered gaussian with variance $v(t)$. Recall that the Fisher information between two gaussian random variables (whose covariance matrices commute) is given by:
\[I \left(\mathcal{N}\left(\mu_{1}, \Sigma_{1} \right)|\mathcal{N}\left(\mu_{2}, \Sigma_{2} \right) \right)=\norm{\Sigma_{2}^{-1} \left(\mu_{1}-\mu_{2} \right)}^{2}+\text{Tr}\left( \Sigma_{2}^{-2} \left(\Sigma_{2}-\Sigma_{1} \right)^{2} \Sigma_{1}^{-1} \right).\]
Relying on this formula, we obtain
\begin{align*} I_{t}^{k} & =\frac{1}{v(t)^{2}} \text{Tr} \left( v(t)^{2} c_{n}(t)^{2} J_{k}^{2} \left( v(t) \left(I_{k}-c_{n}(t) J_{k} \right) \right)^{-1} \right)
=\frac{1}{v(t)} c_{n}(t)^{2} k \ \text{Tr}\left(J_{k} \left(I_{k}-c_{n}(t) J_{k} \right)^{-1} \right).
\end{align*}
Note moreover that, since $J_{k}^{2}=k J_{k}$, $J_{k}$ is similar to the matrix 
\begin{equation*}
\begin{pmatrix}
k & 0 & \cdots & 0 \\
0 & 0 & \cdots & 0 \\
\vdots  & \vdots  & \ddots & \vdots  \\
0 & 0 & \cdots & 0 
\end{pmatrix},
\end{equation*}
and thus we have
\begin{align*} I_{t}^{k} = \frac{c_{n}(t)^{2} k^{2}}{v(t)(1-c_{n}(t) k)}.
\end{align*}
Noticing that $\frac{\alpha(t)}{n}\leq c_n(t) \leq \frac{\beta(t)}{n}$, with
\[
\alpha(t) = 1-\frac{a}{a+b} \frac{1-e^{-2(a+b)t}}{1-e^{-2at}},\quad \text{and} \quad \beta(t) = 1-\frac{a}{a+2b} \frac{1-e^{-2(a+2b)t}}{1-e^{-2at}},
\]
we deduce the bound
\[
\frac{\alpha^2(t)}{v(t)}\frac{k^2}{n^2}\leq I^k_t\leq \frac{\beta^2(t)}{v(t)(1-\beta(t))}\frac{k^2}{n^2}.
\]
Remark that in this estimate, $a$ and $b$ can be positive or negative in order to get a non uniform in time estimate. However if $a>0$ and $a+b>0$ then the propagation of chaos with sharp rate is uniform in time. We will consider the uniform in time case in a future work.

\section{Regularity estimates}
\label{Reg}

The aim of this Section is to justify that all the computations made in the proof of Lemma~\ref{lem: I(m1|m2)} are valid when replacing $m_{1,t}$ with \begin{equation*}
    m^k_{1,t} :=\mu^{k,N}_t
\end{equation*}
and $m_{2,t}$ with 
\begin{equation*}
    m^k_{2,t}:=\bar \mu^{\otimes k}_t.
\end{equation*}
On one hand, we will prove that, under suitable assumptions on the initial conditions, the hypotheses of Lemma~\ref{lem: I(m1|m2)}, at the exceptions of the regularity hypothesis on the derivatives of $b_1$, are satisfied. 
More precisely, denoting $f_{k,t} = m^k_{1,t}/m^k_{2œ,t}$ and
\begin{align*}
    I^k_t = \int m^k_{2,t} \frac{\Vert \nabla f_{k,t}\Vert^2}{f_{k,t}}, \quad J^k_t = \int m^k_{2,t} \frac{\Vert \nabla^2 f_{k,t}\Vert^2}{f_{k,t}},
    \quad K^k_t = \int m^k_{2,t} \frac{\Vert \nabla^3 f_{k,t}\Vert^2}{f_{k,t}},\\
    L^k_t = \int m^k_{2,t} \frac{\Vert \nabla f_{k,t}\Vert^4}{f_{k,t}^3},\quad
    M^k_t= \int m_{1,t}^k \norm{\nabla^2 \log m^k_{2,t}}^2 ,\quad N^k_t = \int m_{1,t}^k \norm{\nabla \log m^k_{2,t}}^4,\\
    O^k_t= \int m_{1,t}^k \norm{b_{2,t}}^4.
\end{align*}
We will prove the following Proposition.
\begin{proposition}\label{prop:Ik...Nk}
Suppose that there exist positive constants $\kappa_{i,0}$ and $\kappa_{i,0}'$ such that, for $i=1,2$,
\begin{equation}\label{hyp:gaussian tail I}
  \kappa_{i,0}^{-1} -(\kappa_{i,0}')^{-1} \norm{v}^2 \leq \log m^N_{i,0}(v) \leq \kappa_{i,0} -\kappa_{i,0}' \norm{v}^2,  
\end{equation}
and that there exist positive constants $\kappa_{i,j}$ such that, for $i=1,2$ and $j=1,2,3$,
\begin{equation}\label{hyp:gassian tail II bis}
    \norm{\nabla^j \log m^N_{i,0}}^2 \leq \kappa_{i,j}\left(1+ \left|\log m^N_{i,0}\right|^j\right).
\end{equation}
Then
\[
\max_{k=1,\ldots,N} \sup_{t\in [0,T]} (I^k_t + J^k_t + K^k_t + L^k_t + M^k_t + N^k_t + O^k_t) < \infty.
\]
\end{proposition}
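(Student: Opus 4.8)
\emph{Reduction.} Write $m^N_{1,t}=\mu^{N,N}_t$ and $m^N_{2,t}=\bar\mu^{\otimes N}_t$, so $m^k_{1,t}=\mu^{k,N}_t$ and $m^k_{2,t}=\bar\mu^{\otimes k}_t$ are their first-$k$-coordinate marginals, and recall that $f_{k,t}$ is the density of $\mu^{k,N}_t$ with respect to $\bar\mu^{\otimes k}_t$, so $\nabla^j\log f_{k,t}=\nabla^j\log m^k_{1,t}-\nabla^j\log m^k_{2,t}$. Using $m^k_{2,t}f_{k,t}=m^k_{1,t}$ and the elementary identities $\nabla f/f=\nabla\log f$, $\nabla^2 f/f=\nabla^2\log f+\nabla\log f\otimes\nabla\log f$, $\nabla^3 f/f=\nabla^3\log f+(\text{products of }\nabla^2\log f\text{ and }\nabla\log f)+(\nabla\log f)^{\otimes 3}$, one rewrites $I^k_t=\int m^k_{1,t}\|\nabla\log f_{k,t}\|^2$, $J^k_t=\int m^k_{1,t}\|\nabla^2 f_{k,t}/f_{k,t}\|^2$, $K^k_t=\int m^k_{1,t}\|\nabla^3 f_{k,t}/f_{k,t}\|^2$, $L^k_t=\int m^k_{1,t}\|\nabla\log f_{k,t}\|^4$, so that, together with $M^k_t,N^k_t,O^k_t$, all seven quantities are bounded by integrals against $m^k_{1,t}=\mu^{k,N}_t$ of a polynomial, of degree at most six, in the quantities $\|\nabla^j\log m^k_{1,t}\|$, $\|\nabla^j\log m^k_{2,t}\|$ ($j=1,2,3$) and $\|b_{2,t}\|$. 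It therefore suffices to prove, for $t\in[0,T]$ and with constants locally bounded in $t$ (and, although not needed for the statement, uniformly in $N$): (i) two-sided Gaussian bounds for $m^N_{i,t}$ on $\mathbb{R}^{Nd}$, hence for $\mu^{k,N}_t$ and $\bar\mu^{\otimes k}_t$, which in particular give $|\log m^k_{i,t}(v)|\le C_T(1+\|v\|^2)$ and finiteness of all polynomial moments of $\mu^{k,N}_t$; (ii) the propagated derivative bounds $\|\nabla^j\log m^N_{i,t}\|^2\le C_T(1+|\log m^N_{i,t}|^j)$ for $j=1,2,3$; and (iii) the linear growth $\|b^i_{2,t}(v_i)\|\le C(1+\|v_i\|)$, which is immediate since $F$ has bounded derivatives and $\Gamma$ is bounded. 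Granting (i)--(iii), together with the $L^p$-integrability of $\|\nabla^j\log\mu^{k,N}_t\|$ for $p\le 6$ derived below, a routine application of Hölder's inequality and the Gaussian moment bounds shows that each of $I^k_t,\dots,O^k_t$ is finite and bounded on $[0,T]$; taking the maximum over the finitely many $k\in\{1,\dots,N\}$ gives the claim.

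\emph{Gaussian and positivity bounds.} In the torus case $m^N_{1,t}$ and $\bar\mu_t$ solve uniformly parabolic equations with smooth coefficients on a compact manifold, so the parabolic maximum principle and Harnack inequality keep them between positive constants on $[0,T]$ and $C^\infty$ with bounded derivatives, and (i)--(iii) are immediate. In $\mathbb{R}^d$, the drift of the full $N$-particle Fokker--Planck equation, $b^N_{1,t}=(F(v_i)+\tfrac1N\sum_{j=1}^N\Gamma(v_i-v_j))_{i}$, has linear growth and smooth derivatives bounded uniformly in $N$ (the $\tfrac1N\sum_j$ averaging is crucial here), and $b_{2,t}=F+\Gamma*\bar\mu_t$ is smooth with bounded derivatives and linear growth. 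Representing $m^N_{i,t}$ as the time-$t$ law of the associated (McKean--Vlasov, for $i=2$) SDE and invoking Aronson/Feynman--Kac two-sided Gaussian estimates for the transition density of a non-degenerate diffusion with linear-growth drift, the initial Gaussian bounds \eqref{hyp:gaussian tail I} propagate to $t\in[0,T]$ with constants continuous in $t$; marginalizing gives (i) for $\mu^{k,N}_t$ and $\bar\mu^{\otimes k}_t$.

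\emph{Higher log-derivative bounds and conclusion.} This is the crux. Setting $u=-\log m^N_{i,t}$, a direct computation from \eqref{eq: evolution m_u} turns the Fokker--Planck equation into the viscous Hamilton--Jacobi--Bellman equation $\partial_t u=\Delta u-\|\nabla u\|^2+\nabla\cdot b-b\cdot\nabla u$, with $b=b^N_{1,t}$ (resp. $b_{2,t}$). Differentiating this equation once, twice and three times and running Bernstein-type maximum-principle arguments — for $j=2$ this is precisely the Hessian estimate of \cite{bdd_hess}, and $j=1$, $j=3$ follow by the same scheme, using that $b$ has bounded derivatives up to the order required — propagates the initial bounds \eqref{hyp:gassian tail II bis} to all $t\in[0,T]$, with constants continuous in $t$, giving (ii) for $\mu^{N,N}_t$ and for $\bar\mu_t$. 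For $\bar\mu^{\otimes k}_t$, (ii) follows by tensorization (its log-Hessian is block-diagonal, and similarly for the other orders); for the marginal $\mu^{k,N}_t$ one uses the conditioning identities $\nabla_{v_i}\log\mu^{k,N}_t=\mathbb{E}_{\mu^{N,N}_t}[\nabla_{v_i}\log\mu^{N,N}_t\mid v_1,\dots,v_k]$ and their second- and third-order analogues (which produce conditional covariance and cumulant corrections), together with Jensen's inequality, to bound $\int\mu^{k,N}_t\|\nabla^j\log\mu^{k,N}_t\|^p$ by $\int\mu^{N,N}_t$ of a polynomial in $\|\nabla^\ell\log\mu^{N,N}_t\|$ ($\ell\le j$), which is finite for every $p$ by (i)--(ii); this supplies the integrability used in the first paragraph, in place of a pointwise version of (ii) for $\mu^{k,N}_t$ (which need not hold, since marginalization may spoil the polynomial-in-$|\log m|$ control). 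The two genuine difficulties are the third-order Bernstein estimate — delicate because the term $\|\nabla u\|^2$ couples all derivative orders, so the hierarchy must be closed using the lower-order bounds already obtained — and the control of the conditional-cumulant terms in the marginalization; both rely on the Gaussian bounds of step (i). Once (i)--(iii) and the marginal $L^p$-bounds are in place, the reduction of the first paragraph yields $I^k_t+\dots+O^k_t\le C_T$ uniformly over $k\le N$ and $t\in[0,T]$, which is the assertion.
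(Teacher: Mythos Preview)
Your proposal is correct in outline but follows a genuinely different route from the paper. The paper first proves a monotonicity lemma, $I^k\le I^{k+1}$, $J^k\le J^{k+1}$, $K^k\le K^{k+1}$ (Lemma~\ref{lem:hierarchy I J K}), and the auxiliary bound $L^k\le C(J^k+N^k)$ (Lemma~\ref{lem:link Lk Jk Nk}); this reduces everything to the top level $k=N$, where the drifts $b^{i,N}_{1,t}$ and $b^i_{2,t}$ are smooth with bounded derivatives. At that level the paper runs a parabolic maximum-principle argument (Lemma~\ref{lemme_grad}) directly on the quantities $\|\nabla^j m\|^2/m$, producing the pointwise bounds $\|\nabla^j m^N_{i,t}\|\le C(1+\|v\|^j)m^N_{i,t}$ for $j=1,2,3$; combined with Gaussian tails this finishes $J^N,K^N,L^N,M^N,N^N,O^N$ in one stroke. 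By contrast, you descend from $N$ to $k$ via the score-conditioning identities $\nabla_{v_i}\log\mu^{k,N}_t=\mathbb{E}[\nabla_{v_i}\log\mu^{N,N}_t\,|\,v_{1:k}]$ and their higher-order analogues, together with Jensen, to transfer $L^p$-bounds on $\nabla^j\log\mu^{N,N}_t$ to $\nabla^j\log\mu^{k,N}_t$. This is valid, and it is nice that it makes the dependence on $N$ transparent, but it forces you to track conditional covariance and third-cumulant corrections and hence higher moments (e.g.\ $\int\mu^{N,N}_t\|\nabla\log\mu^{N,N}_t\|^8$) that the paper never needs. The paper's monotonicity lemma bypasses all of this cleanly.

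Two smaller remarks. Your citation of \cite{bdd_hess} for the $j=2$ Hessian bound is slightly off: in the paper that reference is used only for $\bar\mu_t$ under a gradient-drift assumption (Proposition~\ref{LemDev2}); the relevant result here, covering both $m^N_{1,t}$ and $m^N_{2,t}$ without gradient structure, is the paper's own Lemma~\ref{lemme_grad}, whose proof is indeed a Bernstein/maximum-principle argument but on $\|\nabla^j m\|^2/m$ rather than on $\nabla^j(-\log m)$ directly---this sidesteps the quartic term $\|\nabla u\|^4$ that the HJB formulation produces. And your appeal to ``Aronson/Feynman--Kac two-sided Gaussian estimates'' for linear-growth drift should be made precise (classical Aronson requires bounded drift); the paper simply cites \cite[Chapter~9]{friedman2008partial}.
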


\begin{rem}
The hypotheses of Proposition~\ref{prop:Ik...Nk} imply in particular that there exist constants $c_{i,j}$ and $c'_{i,j}$ such that, for $i=1,2$, $j=1,2,3$ and all $v\in \mathbb{R}^d$,
\begin{equation}\label{hyp:gassian tail II}
    \norm{\nabla^j  m^N_{i,0}}^2 \leq  c_{i,j} \left(1+ \left|\log m^N_{i,0}\right|^j\right) (m^N_{i,0})^2,
\end{equation}
and
\begin{equation}\label{hyp:gassian tail III}
    \norm{\nabla^j m^N_{i,0}}(v) \leq c'_{i,j}\left(1+\norm{v}^j\right) m_{i_0}^N(v).
\end{equation}
\end{rem}

On the other hand we will prove that the terms involving the derivatives of $b_1$ in the proof of Lemma~\ref{lem: I(m1|m2)} (this concerns the terms $H^{i,j}_1$, $H^{i,j}_3$, $H^{i,j}_4$, $H^{i,j}_6$, $H^{i,j}_8$ and $J^{i,j}_1$) are still well defined when $b_1$ is replaced by $b^{i,k}_1$ (recall \eqref{eq:def bi1}). More precisely, dropping the dependency in time in our notations, we will prove the following Proposition.

\begin{proposition}\label{prop:derivatives b1}
We have
\begin{equation*}
    \left| \int m^k_2 \nabla_{v_i} f_k  \cdot \nabla_{v_i}\left(\nabla_{v_j}\cdot\left(b^{j,k}_1 \right)\right)\right|\leq C\left(I^{k+1}+J^{k+1}+L^{k+1}+M^{k+1}+N^{k+1}\right),
\end{equation*}
\begin{equation*}
    \left| \int m^k_2 \frac{\nabla_{v_i} f_k}{f_k}  \cdot \nabla_{v_i} b^{j,k}_1 \nabla_{v_j} f_k\right|\leq C\left(I^{k+1}+L^{k+1}+N^{k+1}\right),
\end{equation*}
\begin{equation*}
    \left|\int m^k_2 \nabla_{v_i} f_k\cdot \nabla_{v_i} b^{j,k}_1 \nabla_{v_j} \log m^{k}_2 \right|\leq C\left(I^{k+1}+L^{k+1}+M^{k+1}+N^{k+1}\right),
\end{equation*}
\begin{equation*}
    \left|\int m^k_2 f_k\nabla^2_{v_i,v_j}\log f_k : \nabla_{v_i} b^{j,k}_1  \right|\leq C\left(I^{k+1}+J^{k+1}+L^{k+1}+N^{k+1} \right),
\end{equation*}
\begin{equation*}
    \left|\int m^k_2 \frac{\norm{\nabla_{v_i} f_k}^2}{f_k} \nabla_{v_j}\cdot  b^{j,k}_1  \right|\leq C\left( I^{k+1}+L^{k+1}+N^{k+1}\right).
\end{equation*}
\end{proposition}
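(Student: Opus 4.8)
The plan is to differentiate $b^{j,k}_{1,t}$ explicitly, isolate the genuinely bounded contributions, and reduce every remaining term to a $(k+1)$-particle integral that is controlled by the quantities of Proposition~\ref{prop:Ik...Nk}. First I would record, for $i,j\in\{1,\dots,k\}$, the derivatives of
\[
b^{j,k}_{1,t}(v_{1:k})=F(v_j)+\frac1N\sum_{\ell=1}^k\Gamma(v_j-v_\ell)+\frac{N-k}{N}\int\Gamma(v_j-v^*)\,\mu^{k+1|k}_t(v^*|v_{1:k})\,\mathrm dv^*
\]
(recall \eqref{eq:def bi1}). Using the smoothness, strict positivity and Gaussian-type decay of $\mu^{k,N}_t$ and of its spatial derivatives up to order three, established in this section (and already underlying the proof of Proposition~\ref{prop:Ik...Nk}), one may differentiate under the integral sign and use $\nabla_{v_i}\mu^{k+1|k}_t=\mu^{k+1|k}_t\,\nabla_{v_i}\log\mu^{k+1|k}_t$. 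This produces, besides terms bounded in terms of $\|\Gamma\|_\infty$, $\|\nabla\Gamma\|_\infty$, $\|\nabla^2\Gamma\|_\infty$ and $\|\nabla F\|_\infty$, only expressions of the form $\langle K(v_j-\cdot),P\rangle_{\mu^{k+1|k}_t}$, where $K$ is a bounded kernel built from $\Gamma,\nabla\Gamma,\nabla^2\Gamma$ and $P$ is a polynomial of degree at most two in the entries of $\nabla_{v_\ell}\log\mu^{k+1|k}_t$ and $\nabla^2_{v_\ell,v_m}\log\mu^{k+1|k}_t$; the diagonal case $i=j$ only adds further bounded terms.

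Next, for each of the five integrals I would substitute these identities. Since $m^k_{2,t}\nabla f_{k,t}=m^k_{1,t}\nabla\log f_{k,t}=\mu^{k,N}_t\,\nabla\log f_{k,t}$ and $m^k_{2,t}f_{k,t}=\mu^{k,N}_t$, each integrand over $\mu^{k,N}_t(\mathrm dv_{1:k})$ becomes a product of an $f_k$-factor (among $\nabla_{v_i}\log f_{k,t}$ and $\nabla^2_{v_i,v_j}\log f_{k,t}$), possibly an explicit factor $\nabla_{v_j}\log m^k_{2,t}=\nabla\log\bar\mu_t(v_j)$, a bounded kernel, and an inner $\mu^{k+1|k}_t(\mathrm dv^*)$-average of a polynomial in the scores and Hessians of $\log\mu^{k+1|k}_t$. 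I would then use conditional Jensen to move that inner average inside a square — for instance $\|\langle\Gamma(v_j-\cdot),Q\rangle_{\mu^{k+1|k}_t}\|^2\le\|\Gamma\|_\infty^2\langle\|Q\|^2,\mu^{k+1|k}_t\rangle$ — and Hölder in $v_{1:k}$ against $\mu^{k,N}_t$ with exponents $(2,4,4)$ to peel off the $f_k$-factor, bounded by $I^k$, $J^k$ or $L^k$, and the factor $\nabla\log\bar\mu_t(v_j)$, bounded by $(N^k)^{1/4}$, from the ``$b_1$-factor''. By the tower property $\mu^{k,N}_t(\mathrm dv_{1:k})\,\mu^{k+1|k}_t(\mathrm dv^*)=\mu^{k+1,N}_t(\mathrm dv_{1:k+1})$, the latter becomes a $(k+1)$-particle integral of a polynomial of degree at most four in $\nabla_{v_\ell}\log\mu^{k+1|k}_t$ and $\nabla^2_{v_\ell,v_m}\log\mu^{k+1|k}_t$.

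To conclude I would use, for $\ell,m\le k$, that $\mu^{k+1|k}_t=\mu^{k+1,N}_t/\mu^{k,N}_t$ and $\log\mu^{r,N}_t=\log f_{r,t}+\log\bar\mu^{\otimes r}_t$ give
\[
\nabla_{v_\ell}\log\mu^{k+1|k}_t=\nabla_{v_\ell}\log f_{k+1,t}-\nabla_{v_\ell}\log f_{k,t},\qquad \nabla^2_{v_\ell,v_m}\log\mu^{k+1|k}_t=\nabla^2_{v_\ell,v_m}\log f_{k+1,t}-\nabla^2_{v_\ell,v_m}\log f_{k,t}
\]
(the $\log\bar\mu_t$-contributions cancelling between the two levels), together with the bounds $\int\mu^{k+1,N}_t\|\nabla\log f_{k+1,t}\|^2=I^{k+1}$, $\int\mu^{k+1,N}_t\|\nabla\log f_{k+1,t}\|^4=L^{k+1}$ and $\int\mu^{k+1,N}_t\|\nabla^2\log f_{k+1,t}\|^2\le 2(J^{k+1}+L^{k+1})$. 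The level-$k$ analogues are absorbed by the monotonicities $I^k\le I^{k+1}$, $J^k\le J^{k+1}$, $L^k\le L^{k+1}$, which follow from the joint convexity of $(\xi,y)\mapsto\|\xi\|^2/y$ and $(\xi,y)\mapsto\|\xi\|^4/y^3$ and the identity $f_{k,t}=\int f_{k+1,t}\,\bar\mu_t(\mathrm dv_{k+1})$, while $M^k\le M^{k+1}$ and $N^k\le N^{k+1}$ hold because $\|\nabla^2\log\bar\mu^{\otimes k}_t\|^2$ and $\|\nabla\log\bar\mu^{\otimes k}_t\|^2$ increase with $k$ and the first-$k$ marginal of $\mu^{k+1,N}_t$ is $\mu^{k,N}_t$. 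A final use of Young's inequality linearises the square roots coming from Hölder and yields the five stated inequalities, every quantity involved being finite by Proposition~\ref{prop:Ik...Nk}.

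The hard part will be purely organisational bookkeeping: expanding $\nabla_{v_i}(\nabla_{v_j}\!\cdot b^{j,k}_{1,t})$ and the four simpler combinations without breaking the conditional-expectation structure — it is exactly the identity $\nabla\mu^{k+1|k}_t=\mu^{k+1|k}_t\nabla\log\mu^{k+1|k}_t$ together with conditional Jensen that keeps the score terms under control — and tracking integrability exponents carefully, so that a second derivative on $b_1$ sitting next to a $\nabla\log f_k$ is absorbed by $J^{k+1}$ rather than by something worse, a product of two scores by $L^{k+1}$, and so that the $\mathbb{R}^d$-growth of $\nabla\log\bar\mu_t$ and $\nabla^2\log\bar\mu_t$ is only ever paired against the finite fourth- and second-moment quantities $N^{k+1}$ and $M^{k+1}$ supplied by Proposition~\ref{prop:Ik...Nk}.
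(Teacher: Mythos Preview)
Your proposal is correct and follows essentially the same route as the paper: decompose $b^{j,k}_1$, isolate the bounded pieces, rewrite the remaining integrals as $(k+1)$-particle integrals via $m_2^k f_k\,\mu^{k+1|k}_t=\mu^{k+1,N}_t$, expand $\log\mu^{k+1|k}_t$ in terms of $f_k,f_{k+1}$ and $\bar\mu_t$, and absorb level-$k$ quantities into level-$(k+1)$ ones. Two small differences are worth noting: (i) you observe that for $\ell\le k$ the $\bar\mu_t$-contributions in $\nabla_{v_\ell}\log\mu^{k+1|k}_t$ cancel exactly, which the paper does not exploit (it keeps the $\log m_2^{k+1}-\log m_2^k$ terms and this is why $M^{k+1}$ and $N^{k+1}$ appear in the stated bounds); (ii) you invoke the joint convexity of $(\xi,y)\mapsto\|\xi\|^4/y^3$ to get $L^k\le L^{k+1}$ directly, whereas the paper goes through Lemma~\ref{lem:link Lk Jk Nk} ($L^k\le C(J^k+N^k)$) instead. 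Both variants are legitimate and lead to the same conclusion.
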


\subsection{Proof of Proposition~\ref{prop:Ik...Nk}}

We begin with two Lemma showing hierarchies between the terms appearing in Proposition~\ref{prop:Ik...Nk}. The proof of Lemma~\ref{lem:link Lk Jk Nk} relies on similar arguments as in \cite[Proposition B.1.]{Tabary2025propagation}.

\begin{lemma}\label{lem:hierarchy I J K}
We have, for all $k=1,2,\ldots, N-1$,
\[
I^k \leq I^{k+1}, \quad J^k \leq J^{k+1}, \quad K^k \leq K^{k+1}.
\]
\end{lemma}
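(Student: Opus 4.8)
I would deduce all three inequalities from one convexity argument: the relative Fisher information, and its higher-order analogues, can only increase when one coordinate is added, the reference measure being a product. This is a relative (and ``higher-derivative'') version of the superadditivity of the Fisher information. I carry it out for $J^k$ and indicate only the trivial changes for $I^k$ and $K^k$. Write $f_{k,t}=\mu^{k,N}_t/\bar\mu_t^{\otimes k}$; since $\nabla$ ranges over the $kd$ variables $v_1,\dots,v_k$, one has $J^k=\sum_{i,j=1}^{k}\int\bar\mu_t^{\otimes k}\,\norm{\nabla^2_{v_i,v_j}f_{k,t}}^2/f_{k,t}$, and similarly $I^k$ involves first derivatives and $K^k$ third derivatives. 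The starting point is the marginal identity between consecutive levels: since $\mu^{k,N}_t$ is the first-$k$-coordinate marginal of $\mu^{k+1,N}_t$ and $\bar\mu_t^{\otimes(k+1)}=\bar\mu_t^{\otimes k}\otimes\bar\mu_t$ is a product, integrating out $v_{k+1}$ gives, with $x=(v_1,\dots,v_k)$,
\[
f_{k,t}(x)=\int f_{k+1,t}(x,v_{k+1})\,\bar\mu_t(v_{k+1})\,{\rm d}v_{k+1}.
\]

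Because the coefficients are smooth the densities are smooth, and they are strictly positive (by the Gaussian lower bounds in the $\mathbb{R}^d$ case, by parabolic regularity on the torus), so one may differentiate under the integral sign: for all $i,j\le k$,
\[
\nabla_{v_i}f_{k,t}(x)=\int\nabla_{v_i}f_{k+1,t}(x,v_{k+1})\,\bar\mu_t(v_{k+1})\,{\rm d}v_{k+1},\qquad \nabla^2_{v_i,v_j}f_{k,t}(x)=\int\nabla^2_{v_i,v_j}f_{k+1,t}(x,v_{k+1})\,\bar\mu_t(v_{k+1})\,{\rm d}v_{k+1},
\]
and likewise for the third derivatives in $v_1,\dots,v_k$.

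Now I would use that $(a,B)\mapsto\norm{B}^2/a$ is jointly convex on $(0,\infty)\times\mathbb{R}^m$ (for any $m$), being the perspective of the convex function $B\mapsto\norm{B}^2$. Jensen's inequality for the probability measure $\bar\mu_t({\rm d}v_{k+1})$ then gives, for each fixed $x$ and each $i,j\le k$,
\[
\frac{\norm{\nabla^2_{v_i,v_j}f_{k,t}(x)}^2}{f_{k,t}(x)}\le\int\frac{\norm{\nabla^2_{v_i,v_j}f_{k+1,t}(x,v_{k+1})}^2}{f_{k+1,t}(x,v_{k+1})}\,\bar\mu_t(v_{k+1})\,{\rm d}v_{k+1}.
\]
Multiplying by $\bar\mu_t^{\otimes k}(x)$, integrating in $x$, summing over $1\le i,j\le k$, and using $\bar\mu_t^{\otimes k}(x)\bar\mu_t(v_{k+1})=\bar\mu_t^{\otimes(k+1)}(x,v_{k+1})$, one obtains
\[
J^k\le\sum_{i,j=1}^{k}\int\bar\mu_t^{\otimes(k+1)}\,\frac{\norm{\nabla^2_{v_i,v_j}f_{k+1,t}}^2}{f_{k+1,t}}\le\sum_{i,j=1}^{k+1}\int\bar\mu_t^{\otimes(k+1)}\,\frac{\norm{\nabla^2_{v_i,v_j}f_{k+1,t}}^2}{f_{k+1,t}}=J^{k+1},
\]
the last step simply discarding the nonnegative terms with $i=k+1$ or $j=k+1$. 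The identical computation with $\nabla^2_{v_i,v_j}$ replaced by $\nabla_{v_i}$ (a single sum over $i\le k$) gives $I^k\le I^{k+1}$, and with third derivatives it gives $K^k\le K^{k+1}$.

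The computation is short, and the only point that genuinely requires justification is the differentiation under the integral sign, together with the strict positivity of $f_{k+1,t}$ that makes the perspective function finite; both follow from the smoothness and Gaussian-type a priori bounds developed in the rest of Section~\ref{Reg}, which I would simply invoke here.
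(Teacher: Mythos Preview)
Your proof is correct and is in fact cleaner than the argument in the paper. The paper proceeds by writing $f_{k+1}=f_k\,f_{k+1|k}$, expanding $\nabla_k^2(f_kf_{k+1|k})$ via the product rule, applying $\norm{A+B}^2\ge\norm{A}^2+2A:B$ with $A=f_{k+1|k}\nabla_k^2 f_k$, and then checking by hand that each cross term integrates to zero because $\int\nabla_k m_1^{k+1|k}(\cdot\,|\,v_1,\dots,v_k)=0$ and $\int\nabla_k^2 m_1^{k+1|k}(\cdot\,|\,v_1,\dots,v_k)=0$. You instead use the marginal identity $f_k=\int f_{k+1}\,\bar\mu_t(\mathrm d v_{k+1})$ together with Jensen's inequality for the jointly convex perspective map $(a,B)\mapsto\norm{B}^2/a$, which yields the inequality in one line. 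The two routes are of course closely related (expanding the Jensen argument reproduces exactly the cancellation the paper performs), but your formulation is shorter, unifies the three cases $I^k,J^k,K^k$ transparently, and makes explicit the structural reason behind the monotonicity: it is the relative version of Carlen's superadditivity of Fisher information, exploiting only that the reference measure is a product. The paper's explicit expansion has the minor advantage of identifying the defect $J^{k+1}-J^k$ as a sum of nonnegative integrals, but this is not used anywhere.
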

\begin{proof}
Let us prove it for $J^k$. We denote by $\nabla_k$ and $\nabla^2_k$ the restriction of the operators $\nabla$ and $\nabla^2$ to the variables $v_1,\ldots, v_k$. Remark first that
\begin{align*}
    J^{k+1} &\geq  \int m^{k+1}_2 \frac{\Vert \nabla^2_{k} (f_{k}f_{k+1|k})\Vert^2}{f_{k+1}}.
\end{align*}
Moreover, with the identity
\[
\nabla^2_{k} (f_{k}f_{k+1|k}) = f_{k+1|k} \nabla^2_{k} f_{k}+\nabla_k f_k \otimes \nabla_k f_{k+1|k}+\nabla_k f_k \otimes \nabla_k f_{k+1|k} +f_{k} \nabla^2_k f_{k+1|k},
\]
we obtain
\begin{align}
\int m^{k+1}_2 &\frac{\Vert \nabla^2_k (f_{k}f_{k+1|k})\Vert^2}{f_{k+1}}\nonumber\\
&\geq \int m_2^{k+1} f_{k+1|k}^2 \frac{\Vert \nabla^2_k f_{k}\Vert^2}{f_{k+1}}+2\int m_2^{k+1}\frac{f_{k+1|k} \nabla^2_kf_k  \nabla_k f_k \cdot \nabla_k f_{k+1|k}}{f_{k+1}}\nonumber\\
    &\quad + 2\int m_2^{k+1}\frac{f_{k+1|k} \nabla^2_kf_k  \nabla_k f_k \cdot \nabla_k f_{k+1|k} }{f_{k+1}}\nonumber\\
    &\quad + 2\int m_2^{k+1}\frac{f_{k} f_{k+1|k} \nabla^2_kf_k :  \nabla^2_k f_{k+1|k} }{f_{k+1}}\label{eq:decomp Jk}.
\end{align}
Remark now that
\begin{align*}
    \int m_2^{k+1} f_{k+1|k}^2 \frac{\Vert \nabla^2_k f_{k}\Vert^2}{f_{k+1}}& = \int m_2^k \frac{\Vert \nabla^2_k f_{k}\Vert^2}{f_{k}}\int m_2^{k+1|k} (v_{k+1}) f_{k+1|k}(v_{k+1}|v_1\ldots,v_k)\\
    & = \int m_2^k \frac{\Vert \nabla^2_k f_{k}\Vert^2}{f_{k}}\int m_1^{k+1|k}(v_{k+1}|v_1\ldots,v_k)\\
    & = J^k,
\end{align*}
while
\begin{align*}
  \int m_2^{k+1}&\frac{f_{k+1|k} \nabla^2_kf_k  \nabla_k f_k \cdot \nabla_k f_{k+1|k}}{f_{k+1}}\\
&=\int m_2^{k}\frac{\nabla^2_kf_k   \nabla_k f_k  }{f_{k}} \cdot \int  \nabla_k \big(m_2^{k+1|k} f_{k+1|k}\big)(v_{k+1}|v_1,\ldots, v_k)\\
&=\int m_2^{k}\frac{\nabla^2_kf_k  \nabla_k f_k  }{f_{k}} \cdot \int  \nabla_k  m_1^{k+1|k}(v_{k+1}|v_1,\ldots, v_k)\\
&=0.
\end{align*}
With similar computations one shows that the last two terms of the right-hand side of \eqref{eq:decomp Jk} also disappear. This proves that $J^k\leq J^{k+1}$. The proofs for $I^k$ and $K^k$ are similar.
\end{proof}

\begin{lemma}\label{lem:link Lk Jk Nk}
There exists a positive constant $C$ such that  
\[
L^k \leq C\left(J^k +N^k \right).
\]
\end{lemma}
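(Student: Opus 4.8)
The plan is to obtain the bound from a single integration by parts that exchanges the quartic gradient factor in $L^k$ for a Hessian factor (controlled by $J^k$), a factor involving $\nabla\log m^k_2$ (controlled by $N^k$), and a remainder proportional to $L^k$ itself that is then absorbed into the left-hand side. Writing $\nabla m^k_2 = m^k_2\nabla\log m^k_2$ is precisely what links the boundary-free part of the integration by parts to $N^k$, and is the reason $N^k$ appears on the right-hand side.

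Throughout write $f=f_{k,t}$, $m_2=m^k_{2,t}$, $m_1=m^k_{1,t}=f m_2$, and let $\nabla,\nabla^2,\Delta$ denote the differential operators on $(\mathbb R^d)^k\cong\mathbb R^{dk}$, with $\alpha$ ranging over the $dk$ scalar coordinates. First I would write
\[
L^k=\int m_2\,\frac{\|\nabla f\|^4}{f^3}=\sum_\alpha\int\Bigl(m_2\,\frac{\|\nabla f\|^2}{f^3}\,\partial_\alpha f\Bigr)\,\partial_\alpha f,
\]
integrate by parts in the variable $v_\alpha$, and use $\partial_\alpha(\|\nabla f\|^2)=2(\nabla^2 f\,\nabla f)_\alpha$ together with $\partial_\alpha(f^{-3})=-3f^{-4}\partial_\alpha f$. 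Collecting the terms and moving the resulting multiple of $L^k$ to the left yields the identity
\begin{equation*}
2L^k=\int\frac{\|\nabla f\|^2}{f^2}\,\nabla m_2\cdot\nabla f
+2\int\frac{m_2}{f^2}\,(\nabla^2 f\,\nabla f)\cdot\nabla f
+\int\frac{m_2\,\|\nabla f\|^2}{f^2}\,\Delta f.
\end{equation*}
For the last two integrals I use $|(\nabla^2 f\,\nabla f)\cdot\nabla f|\le\|\nabla^2 f\|\,\|\nabla f\|^2$ and $|\Delta f|\le\sqrt{dk}\,\|\nabla^2 f\|$, then Cauchy--Schwarz with the splitting $m_2 f^{-2}\|\nabla f\|^2\|\nabla^2 f\|=\bigl(m_2^{1/2}f^{-3/2}\|\nabla f\|^2\bigr)\bigl(m_2^{1/2}f^{-1/2}\|\nabla^2 f\|\bigr)$, which bounds each of them by $C(d,k)\,(L^k)^{1/2}(J^k)^{1/2}$. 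For the first integral I write $\nabla m_2=m_2\nabla\log m_2$, apply Cauchy--Schwarz in the scalar product, and then Hölder with exponents $4/3$ and $4$ to $m_2 f^{-2}\|\nabla f\|^3\|\nabla\log m_2\|=\bigl(m_2^{3/4}f^{-9/4}\|\nabla f\|^3\bigr)\bigl(m_2^{1/4}f^{1/4}\|\nabla\log m_2\|\bigr)$, which bounds it by $(L^k)^{3/4}(N^k)^{1/4}$. Finally, Young's inequality turns $(L^k)^{3/4}(N^k)^{1/4}$ into $\tfrac14 L^k+C N^k$ and turns each $(L^k)^{1/2}(J^k)^{1/2}$ into $\varepsilon L^k+C_\varepsilon J^k$; choosing $\varepsilon$ small enough that the total $L^k$-coefficient on the right does not exceed, say, $3/2$, one absorbs it and concludes $L^k\le C(J^k+N^k)$ with $C=C(d,k)$, which is all that is needed since in Proposition~\ref{prop:Ik...Nk} the index $k$ runs over the finite set $\{1,\dots,N\}$.

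The main obstacle is that one may not assume $L^k<\infty$ a priori: deducing its finiteness from that of $J^k$ and $N^k$ is exactly the purpose of the lemma, so the integration by parts above must be carried out on a truncated integrand --- replacing $\|\nabla f\|^2$ by a bounded, concave smoothing of it --- producing an inequality uniform in the truncation parameter that is then passed to the limit by monotone convergence. One must check that the extra terms produced when the derivative falls on the cutoff are controlled, after Cauchy--Schwarz, by an arbitrarily small multiple of $(L^k)^{1/2}$ times quantities already known to be finite, so that they disappear in the limit; this bookkeeping follows the scheme of \cite[Proposition~B.1]{Tabary2025propagation}. The (mild) dependence of the constant on $d$ and $k$ is immaterial, for the reason noted above.
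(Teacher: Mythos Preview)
Your argument is correct and rests on the same integration-by-parts identity and Cauchy--Schwarz/Young estimates as the paper's proof; the only organizational difference is that the paper introduces the auxiliary quantity $J^k_\gamma=\gamma^{-2}\int m_2^k f_k^{1-2\gamma}\|\nabla^2 f_k^\gamma\|^2\geq 0$ for $\gamma\in(0,1)$, expands it as $J^k+(\gamma-1)^2L^k$ plus the cross term $2(\gamma-1)\int m_2^k f_k^{-2}\nabla^2 f_k\nabla f_k\cdot\nabla f_k$, substitutes your IBP identity for the latter, and then discards the nonnegative $J^k_\gamma$---so $(1-\gamma)$ plays the role of your $\varepsilon$, and the resulting constant likewise depends on $d,k$. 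Your remark that a truncation is needed to avoid presupposing $L^k<\infty$ is well taken; the paper's proof is formal on this point.
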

\begin{proof}
Define, for $\beta\in (0,1)$,
\[
J^k_\gamma =  \int m_2^k \frac{\Vert \nabla^2 f^\gamma_k\Vert^2}{\gamma^2 f_k^{2\gamma-1}}.
\]
Since
\[
 \nabla^2 f^\gamma_k = \gamma f^{\gamma-1}\nabla^2 f_k+\gamma(\gamma-1)f_k^{\gamma-2}\nabla f_k\otimes \nabla f_k, 
\]
we obtain
\[
J^k_\gamma = J^k +(\gamma-1)^2 L^k + 2(\gamma-1) \int m_2^k \frac{\nabla^2 f_k  \nabla f_k\cdot \nabla  f_k}{f_k^2}.
\]
An integration by parts leads to
\begin{align*}
    \int m_2^k &\frac{1}{f_k^2}\frac{\partial^2 f_k}{\partial v_{i,\alpha}\partial v_{j,\beta}}\frac{\partial f_k}{\partial v_{i,\alpha}}\frac{\partial f_k}{\partial v_{j,\beta}}\\
    & = -\int m^2_k \frac{1}{f_k^2} \frac{\partial \log m_2^k}{\partial v_{i,\alpha}}\frac{\partial f_k}{\partial v_{i,\alpha}}\left(\frac{\partial f_k}{\partial v_{j,\beta}}\right)^2+2\int m_2^k \frac{1}{f_k^3}\left(\frac{\partial f_k}{\partial v_{i,\alpha}}\right)^2\left(\frac{\partial f_k}{\partial v_{j,\beta}}\right)^2\\
    &\quad   - \int m_2^k \frac{1}{f_k^2}\frac{\partial^2 f_k}{\partial v_{i,\alpha}^2}\left(\frac{\partial f_k}{\partial v_{j,\beta}}\right)^2
    - \int m_2^k \frac{1}{f_k^2}\frac{\partial^2 f_k}{\partial v_{i,\alpha}\partial v_{j,\beta}}\frac{\partial f_k}{\partial v_{i,\alpha}}\frac{\partial f_k}{\partial v_{j,\beta}}
\end{align*}
so that
\begin{align*}
    \int m_2^k \frac{\nabla^2 f_k  \nabla f_k\cdot \nabla  f_k}{f_k^2} = L^k - \frac12 \int m_2^k \frac{\Vert\nabla f_k\Vert^2  \nabla \log m_2^k\cdot \nabla  f_k}{f_k^2}-\frac12 \int m_2^k \frac{\Vert\nabla f_k\Vert^2  {\rm Tr}(\nabla^2 f_k)}{f_k^2}.
\end{align*}
We deduce that
\begin{align*}
    \big(2(1-\gamma)-(1-\gamma)^2\big)L^k &=J^k-J^k_\gamma +(1-\gamma) \int m_2^k \frac{\Vert\nabla f_k\Vert^2  \nabla \log m_2^k\cdot \nabla  f_k}{f_k^2}\\
    &\quad +(1-\gamma)\int m_2^k \frac{\Vert\nabla f_k\Vert^2  {\rm Tr}(\nabla^2 f_k)}{f_k^2}.
\end{align*}
Relying moreover on the bounds
\begin{align*}
   \left| \int m_2^k \frac{\Vert\nabla f_k\Vert^2  \nabla \log m_2^k\cdot \nabla  f_k}{f_k^2}\right| &\leq \frac{L^k}{2}+\frac12\int m_2^k \frac{\Vert \nabla \log m_2^k\Vert^2 \Vert \nabla f_k\Vert^2 }{f_k}\\
   &\leq \frac{ 3 L^k}{4}+\frac{N^k}{4} ,
\end{align*}
and
\begin{align*}
    \left|\int m_2^k \frac{\Vert\nabla f_k\Vert^2  {\rm Tr}(\nabla^2 f_k)}{f_k^2}\right| \leq \frac{L^k}{2}+ \frac{k^2 d^2 J^k}{2} ,
\end{align*}
we obtain
\begin{align*}
    \left(\frac{3(1-\gamma)}{4}-(1-\gamma)^2\right)L^k\leq \left(1+\frac{(1-\gamma)k^2 d^2}{2}\right)J^k+ \frac{1-\gamma}{4}N^k,
\end{align*}
which concludes the proof since the constant in front of $L^k$ is positive when $\gamma$ is sufficiently close to $1$.
\end{proof}

The following Lemma, based on the parabolic maximum principle, show that the regularity hypotheses made on the initial condition can be propagated for $t\in [0,T]$. The proof of this result is similar to the one given in \cite{feng2023quantitative} in the context of the $2d$ viscous vortex model, and to the one given in \cite{carrillo2024relative} for the Landau equation for Maxwellian molecules. 

\begin{lemma} \label{lemme_grad}
Under the hypotheses of Proposition~\ref{prop:Ik...Nk}, for any $T>$, there exists positive constants $\rho_{i,j}$ such that for $i=1,2$, $j=1,2,3$, $v\in \mathbb{R}^d$ and $t\in [0,T]$,
\[
\norm{\nabla^j m^N_{i,t}} \leq \rho_{i,j} \left(1+\norm{v}^j\right) m^N_{i,t}(v).
\]
\end{lemma}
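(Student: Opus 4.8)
The plan is to pass to the log-densities. Writing $u_{i,t}:=\log m^{N}_{i,t}$ and $\psi_{i,t}:=-u_{i,t}+C_{0}$ for a constant $C_{0}$ to be chosen, it suffices to prove, for $j=1,2,3$, bounds $\norm{\nabla^{j}u_{i,t}}^{2}\le\tilde\rho_{i,j}\,\psi_{i,t}^{j}$ on $[0,T]$: indeed in the Faà di Bruno expansion $\nabla^{j}m/m=\sum(\text{products }\nabla^{a_{1}}u\cdots\nabla^{a_{r}}u\text{ with }a_{1}+\cdots+a_{r}=j)$ every monomial is then $O(\psi_{i,t}^{j/2})$, hence $O(1+\norm{v}^{j})$ once the two-sided bound $1\le\psi_{i,t}\le C(1+\norm{v}^{2})$ is available. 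Fix $i$ and drop it. The density $m_{t}:=m^{N}_{i,t}$ on $\mathbb{R}^{dN}$ solves the linear Fokker--Planck equation $\partial_{t}m_{t}=\Delta m_{t}-\nabla\cdot(b_{t}m_{t})$, where the vector field $b_{t}$ on $\mathbb{R}^{dN}$ has blocks $b^{\ell}_{t}=F(v_{\ell})+\tfrac1N\sum_{\ell'}\Gamma(v_{\ell}-v_{\ell'})$ for $m^{N}_{1,t}=\mu^{N,N}_{t}$ (the conditional term in \eqref{eq:def bi1} vanishes when $k=N$), and $b^{\ell}_{t}=F(v_{\ell})+\Gamma*\bar\mu_{t}(v_{\ell})$ for $m^{N}_{2,t}=\bar\mu^{\otimes N}_{t}$; in both cases $b_{t}$ is smooth with all spatial derivatives bounded uniformly on $[0,T]$, since $F$, $\Gamma$ and their derivatives are bounded (the degeneracy making the derivatives of $b^{i,k}_{1}$ unbounded only arises for the conditional drift with $k<N$, which is not involved here). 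Consequently $u_{t}$ and $\psi_{t}$ solve
\begin{equation*}
\partial_{t}u_{t}=\Delta u_{t}+\norm{\nabla u_{t}}^{2}-b_{t}\cdot\nabla u_{t}-\nabla\cdot b_{t},\qquad \partial_{t}\psi_{t}=\Delta\psi_{t}-\norm{\nabla\psi_{t}}^{2}-b_{t}\cdot\nabla\psi_{t}+\nabla\cdot b_{t}.
\end{equation*}

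The first step is to propagate the Gaussian bounds: comparing $m_{t}$ with suitable Gaussians — again by the parabolic maximum principle, using $\norm{b_{t}}_{\infty}<\infty$ and the hypotheses on $m^{N}_{i,0}$ — one gets $C^{-1}e^{-C^{-1}\norm{v}^{2}}\le m_{t}(v)\le Ce^{-C\norm{v}^{2}}$ uniformly on $[0,T]$, so that for an appropriate $C_{0}$ indeed $1\le\psi_{t}(v)\le C(1+\norm{v}^{2})$. The second step is the Bernstein-type argument proper, carried out on the scale-invariant quantities $g_{j,t}:=\norm{\nabla^{j}u_{t}}^{2}/\psi_{t}^{j}$, iteratively in $j$, via the maximum principle. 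For $j=1$: one differentiates the equation for $u_{t}$, uses the Bochner identity, and evaluates everything at an interior space-time maximum of $g_{1}$, where $\nabla g_{1}=0$ (hence $\nabla\norm{\nabla u_{t}}^{2}=g_{1}\nabla\psi_{t}$) and $\partial_{t}g_{1}\ge 0\ge\Delta g_{1}$; the crucial cancellations come from $\psi_{t}$ solving its own equation, and the $\norm{\nabla u_{t}}^{2}$-nonlinearity produces a strictly negative term $\sim -g_{1}^{2}$ (the gain from the good sign of the Hamiltonian in the $\psi_{t}$-equation) which dominates the lower-order contributions — all controlled by the boundedness of the derivatives of $b_{t}$ and by $\norm{\nabla\psi_{t}}^{2}\le C\psi_{t}$ — and forces $g_{1}\le C$ there. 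Together with the initial bound $g_{1,0}\le C$, which follows from hypothesis \eqref{hyp:gassian tail II bis} of Proposition~\ref{prop:Ik...Nk} and the Gaussian tail \eqref{hyp:gaussian tail I} (they give $\norm{\nabla u_{0}}^{2}\le\kappa_{i,1}(1+|\log m^{N}_{i,0}|)\le C\psi_{0}$), this yields $g_{1}\le C$ on $[0,T]\times\mathbb{R}^{dN}$. For $j=2,3$ one repeats the computation with $g_{j}$, now using the already-established bounds on $\nabla^{\ell}u_{t}$, $\ell<j$; the only top-order ($\nabla^{j+1}u$) contributions are the dissipation $-2\norm{\nabla^{j+1}u_{t}}^{2}$ and transport terms that vanish at the maximum.

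The main obstacle is the control, at orders $j=2$ and $j=3$, of the genuinely nonlinear lower-order terms produced by the log-transform — foremost the cubic term $\mathrm{Tr}\big((\nabla^{2}u_{t})^{3}\big)$ in the evolution of $\norm{\nabla^{2}u_{t}}^{2}$, whose size $\sim\psi_{t}\,g_{2}^{3/2}$ at the maximum of $g_{2}$ is not, by itself, absorbed by the negative terms available from the $\psi_{t}$-equation. Handling it requires either coupling the estimate with the diffusive dissipation $-2\norm{\nabla^{3}u_{t}}^{2}$ across consecutive orders, or first establishing a crude qualitative bound — obtained from interior parabolic (Schauder/parametrix) estimates for the uniformly parabolic operator with smooth bounded coefficients on slabs $[\delta,T]\times\mathbb{R}^{dN}$, then patched on $[0,\delta]$ via the initial regularity \eqref{hyp:gassian tail III} and continuity of the flow — and bootstrapping; this is done exactly along the lines of \cite{feng2023quantitative,carrillo2024relative}. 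A further, routine point is the justification of the maximum principle on the unbounded domain $\mathbb{R}^{dN}$: the $g_{j}$ must attain their maxima (ensured by the same qualitative bounds, or by localization with cutoffs $\chi_{R}$ and $R\to\infty$, the errors being absorbed by the Gaussian decay of $m_{t}$), and the degeneracy of the smoothing as $t\downarrow0$ is covered by the initial hypotheses. Once these technical points are settled, combining the three bounds $g_{j}\le C$ through the Faà di Bruno formula gives the claim.
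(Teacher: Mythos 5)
Your proposal takes a genuinely different route from the paper, though both are maximum-principle arguments. You pass to $u=\log m$ and run a Bernstein/Li--Yau style argument on the scale-invariant quotients $g_j=\norm{\nabla^j u}^2/\psi^j$, evaluating the parabolic inequality at an interior space-time maximum of $g_j$ and exploiting the good sign of the quadratic Hamiltonian. The paper instead stays at the level of the raw density: it computes $(\partial_t-\mathcal{L})$ of $\norm{\nabla^j m}^2/m$ (with $\mathcal{L}m=\Delta m-\nabla\cdot(b m)$), isolates the exact dissipation $-2m\norm{\nabla^{j+1}\log m}^2$, and then builds an explicit supersolution of the \emph{linear} operator $\partial_t-\mathcal{L}$ by adding correction terms of the form $(t+1)^a\, m(\log m)^b$; the conclusion then follows from the linear parabolic maximum principle on the unbounded domain, with the Gaussian tails ensuring admissible growth. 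This buys a cleaner justification: one never has to argue that the nonlinear quotients $g_j$ attain an interior maximum on $[0,T]\times\mathbb{R}^{dN}$ (a point you acknowledge is delicate and address only by appealing to ``localization'' or ``qualitative bounds'' without carrying them out).

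Both routes face the same structural obstacle at $j=2,3$ — the cubic lower-order terms such as $\mathrm{Tr}\bigl((\nabla^2 u)^3\bigr)$ — and you correctly identify it and the remedy: coupling consecutive orders through the diffusive dissipation. The paper implements precisely this: in its second and third steps it adds a multiple of $\norm{\nabla^{j-1} m}^2/m$, whose $(\partial_t-\mathcal{L})$ produces a negative $-2\norm{\nabla^j m}^2/m$ that absorbs the dangerous lower-order contributions, together with further $m(\log m)^b$ corrections to close the hierarchy. You flag this step rather than execute it, so your proposal is a correct and well-identified strategy but incomplete at exactly the computational heart of the lemma. Minor points you do get right and which matter: for $k=N$ the conditional term in $b^{i,k}_1$ drops out, so all drifts involved here do have bounded derivatives; the Gaussian tails are propagated first; and the Fa\`a di Bruno reduction from bounds on $\norm{\nabla^j u}^2/\psi^j$ to the claimed bound on $\norm{\nabla^j m}/m$ is sound once $1\le\psi\le C(1+\norm{v}^2)$ is established.
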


\begin{proof}
Notice that, taking $k=N$, the coefficients $b^{i,N}_{1,t}$ and $b^{i}_{2,t}$ involved in the dynamics of $m^N_{1,t}=\mu^{N,N}_t$ and $m^N_{2,t} = \bar \mu^{\otimes N}_t$ are smooth with bounded derivatives (it is not the case for $b^{i,k}_{1,t}$ for $k<N$). For simplicity, since the proof is similar for $m^N_{1,t}$ and $m^{N}_{2,t}$, we will drop in our notations the dependency in $i$ and $N$ and consider a solution $m=m_t$ to
\[\partial_{t} m_{t}=\Delta m_{t}+\nabla \cdot \left( b_{t} m_{t} \right)=\mathcal{L} m_{t}, \]
with $b_t$ smooth with bounded derivatives and $m_0$ satisfying the hypotheses of Proposition~\ref{prop:Ik...Nk}.

\medskip

{\it First step: bound on $\norm{\nabla m_t}$.}
A straightforward computation leads to
\begin{align*}
\left(\partial_{t}-\mathcal{L}\right) \frac{\norm{\nabla m}^{2}}{m} & = 2 \frac{\nabla m \cdot \nabla \partial_{t} m}{m}-\partial_{t} m \frac{\norm{\nabla m}^{2}}{m^{2}}-\Delta \left(\frac{\norm{\nabla m}^{2}}{m} \right)-\nabla \cdot \left(b_{t} \frac{\norm{\nabla m}^{2}}{m} \right)
\\ &
=\underbrace{2 \nabla m \cdot \frac{\nabla \left[\Delta m \right]}{m}-\Delta\left(\frac{\norm{\nabla m}^{2}}{m} \right)-\Delta m \frac{\norm{\nabla m}^{2}}{m^{2}}}_{=A} \\ &
\quad 
+\underbrace{2\nabla m \cdot \frac{\nabla \left[ \nabla \cdot \left(b_{t} m \right) \right]}{m}
-\nabla \cdot \left(b_{t} m \right) \frac{\norm{\nabla m}^{2}}{m^{2}}-\nabla \cdot \left(b_{t} \frac{\norm{\nabla m}^{2}}{m} \right)}_{=B}.
\end{align*}
Notice that 
\begin{align*}
A & =2 \sum_{i,j} \frac{\partial_{i} m \partial^{3}_{ijj} m}{m}-\sum_{i,j} \partial^{2}_{jj} \left(\frac{ \left(\partial_{i} m \right)^{2}}{m}\right)-\sum_{i,j} \partial^{2}_{jj} m \frac{\left(\partial_{i} m \right)^{2}}{m^{2}},
\end{align*}
so that, since
\begin{align*}
\partial^{2}_{jj} \left(\frac{\left(\partial_{i} m\right)^{2}}{m} \right) & =\partial_{j} \left(2 \partial^{2}_{ij} m \frac{\partial_{i} m}{m}-\partial_{j} m \frac{\left(\partial_{i} m\right)^{2}}{m^{2}} \right)
\\ &
=2 \partial^{3}_{ijj} \frac{\partial_{i} m}{m}+2 \frac{\left(\partial^{2}_{ij} f \right)^{2}}{m}-2 \partial^{2}_{ij} m \frac{\partial_{i} m \partial_{j} m}{m^{2}}-\partial^{2}_{jj} m \frac{\left(\partial_{i} m \right)^{2}}{m^{2}}-2 \partial^{2}_{ij} m \frac{\partial_{i} m \partial_{j} m}{m^{2}}
\\ &
\quad +2 \frac{\left(\partial_{j} m \right)^{2} \left(\partial_{i} m \right)^{2}}{m^{3}},
\end{align*}
we simply have
\begin{align*}
A & = \sum_{i,j}\left(-2 \left(\partial^{2}_{ij} m \right)^{2}+4 \ \partial^{2}_{ij} m \frac{\partial_{i} m \partial_{j} m}{m^{2}}-2 \ \frac{\left(\partial_{j} m \right)^{2} \left(\partial_{i} m \right)^{2}}{m^{3}}\right)
\\ &
=-2 m \norm{\nabla^2 \log m}^2.
\end{align*}
Concerning $B$, noticing that
\begin{align*}
B= \sum_{i,j} 2 \ \frac{\partial_{i} m}{m} \partial^{2}_{ij} \left(b_{t}^{j} m\right)-\partial_{j} \left(b_{t}^{j} m \right) \frac{\left(\partial_{i} m\right)^{2}}{m^{2}}-\partial_{j} \left( b_{t}^{j} \frac{\left(\partial_{i} m\right)^{2}}{m} \right),
\end{align*}
the identities
\begin{align*}
\partial_{j} \left(b_{t}^{j} \frac{\left(\partial_{i} m\right)^{2}}{m} \right)=\partial_{j} b_{t}^{j} \ \frac{\left(\partial_{i} m \right)^{2}}{m}+2 b_{t}^{j} \ \partial^{2}_{ij} m \ \frac{\partial_{i} m}{m}-b_{t}^{j} \ \partial_{j} m \frac{\left(\partial_{i} f\right)^{2}}{f^{2}},
\end{align*}
and
\begin{align*}
\partial^{2}_{ij} \left(b_{t}^{j} m \right)=b_{t}^{j} \ \partial^{2}_{ij} m+m \ \partial^{2}_{ij} b_{t}^{j}+\partial_{i} b_{t}^{j} \ \partial_{j} m+\partial_{j} b_{t}^{j} \ \partial_{i} m,
\end{align*}
lead to the following upper bound:
\begin{align*}
B& = \sum_{i,j} 2 \frac{\partial_{i} m}{m} \partial_{ij}^{2} b_{t}^{j}+2 \partial_{i} b_{t}^{j} \frac{\partial_{i} m \partial_{j} m}{m}-\frac{\partial_{j} b_{t}^{j} \left( \partial_{i} m \right)^{2}}{m}
\\ &
\leq \sum_{i,j} \frac{\left(\partial_{i} m \right)^{2}}{m}+m \ \left(\partial_{ij}^{2} b_{t}^{j} \right)+\left(\partial_{i} b_{t}^{j} \right)^{2} \frac{\left(\partial_{i} m \right)^{2}}{m}+\frac{\left(\partial_{j} m\right)^{2}}{m}-\partial_{j} b_{t}^{j} \frac{\left(\partial_{i} m \right)^{2}}{m}
\\ &
\leq C \frac{\norm{\nabla m}^2}{m}+C m.
\end{align*}
Moreover, note that
\begin{align}
\nonumber
\left(\partial_{t} -\mathcal{L}\right) m \log m & =\partial_{t} m \ \log m+\partial_{t} m-\nabla \cdot \left( b_t \ m \log m\right)-\Delta \left( m \log m \right)
\\ &\nonumber
=\Delta m \log m+\nabla \cdot \left(b_t \ m \right) \log m+\Delta m +\nabla \cdot \left(b_t \ m \right)-\nabla \cdot b_t \ m \log m
\\ & \nonumber
\quad -b_t \cdot \nabla \left( m \log m \right)-\Delta \left(m \log m \right)
\\ & \nonumber
=\Delta m -m \ \Delta \log m-2 \frac{\norm{\nabla m}^{2}}{m}+ m \ \nabla \cdot b
\\ & \nonumber
=-\frac{\norm{\nabla m}^{2}}{m}+m \nabla \cdot b
\\ &
\leq -\frac{\norm{\nabla m}^{2}}{m}+C m\nonumber,
\end{align}
and that
\[
\left(\partial_{t}-\mathcal{L}\right) \left( (t+1) m \right)= (t+1) \left(\partial_{t}-\mathcal{L} \right)m+m= m.
\]
Therefore, for a positive constant $C_0$ large enough and a positive constant $C_1$ large enough (depending on $C_0$), we have
\begin{equation*} 
\left(\partial_{t}-\mathcal{L} \right) \left[ \frac{\norm{ \nabla m}^{2}}{m}+C_{0} m \log m- C_1 (t+1) m \right] \leq 0 .
\end{equation*}
Taking the values of $C_0$ and $C_1$ large enough, we also have
\[
\frac{\norm{ \nabla m_0}^{2}}{m_0}+C_{0} m_0 \log m_0- C_1 (t+1) m_0\leq 0.
\]
Indeed, $\log m_0(v)$ is supposed to have negative values for $v$ large enough, and the constant $c_{i,1}$ may be enlarged in \eqref{hyp:gassian tail II}. Note moreover that the Gaussian tail hypothesis for the initial condition \eqref{hyp:gaussian tail I} ensure (see \cite[Chapter 9]{friedman2008partial}) that for $t\in [0,T]$, 
\begin{equation}\label{eq:gaussian bounds parametrix}
C^{-1} e^{-c^{-1}\norm{v}^2}\leq m_t(v) \leq C e^{-c\norm{v}^2}, \quad \text{and}\quad \norm{\nabla m_t(v)}\leq C e^{c\norm{v}^2},
\end{equation}
for some constants $c$ and $C$ depending on $T$. We can then rely on a parabolic maximum principle on unbounded domain (see \cite[Theorem 3.4]{Meyer2014}) to deduce that, for all $t>0$, 
\begin{equation*}
\frac{\norm{\nabla m}^{2}}{m}+C_{0} m \log m\leq C_1 (t+1) m.
\end{equation*}
Dividing by $m$ and recalling \eqref{eq:gaussian bounds parametrix}, we deduce
\[\frac{\norm{\nabla m}^{2}}{m^2}(v) \leq C \left(1+\norm{v}^{2} \right). \]

\medskip

{\it Second step: bound on $\norm{\nabla^2 m_t}$.} The proof is similar to the first step.
Regrouping the terms involving only the Laplacian and the interaction terms we get after straightforward computations:
\begin{align*}
\left(\partial_{t}-\mathcal{L}\right) \frac{\norm{\nabla^{2} m}^{2}}{m} & =-\sum_{i,j,k} m \left(\frac{\partial^{3}_{ijk} m}{m}-\frac{\partial^{2}_{ij} m \partial_{k} m}{m^{2}} \right)^{2} +\sum_{ijk} \partial_{ijk}^{3} b^{k}_t \partial^{2}_{ij} m\\ 
&\quad +2\sum_{ijk} \frac{\partial^{2}_{ij} m}{m} \Bigl(\partial^{2}_{ij} b^{k}_t \partial_{k} m
+\partial^{2}_{ik} b^{k}_t \partial_{j} m+
\partial_{jk} b^{k}_t \partial_{i} m+\partial_{i} b^{k}_t \partial^{2}_{jk} m+\partial_{j} b^{k}_t \partial^{2}_{ik} m \Bigl).
\end{align*}
Relying then on the inequality $xy \leq \frac{x^{2}+y^{2}}{2}$, we deduce
\begin{equation*}
\left(\partial_{t}-\mathcal{L}\right) \frac{\norm{\nabla^{2} m}^{2}}{m} \leq C\frac{\norm{\nabla^{2} m}^{2}}{m}+C \frac{\norm{\nabla m}^{2}}{m}+C m .
\end{equation*}
Recall now that we obtained in the previous step the bounds
\begin{align*}
\left(\partial_{t}-\mathcal{L}\right) \frac{\norm{\nabla m}^{2}}{m} & \leq -2 m \sum_{i,j} \left(\frac{\partial^{2}_{ij} m}{m}-\frac{\partial_{i} m \partial_{j} m}{m^{2}} \right)^{2}+C \frac{\norm{\nabla m}^{2}}{m}+ C m
\\ &
\leq -2  \frac{\norm{\nabla^2 m}^{2}}{m}+C  \frac{\norm{\nabla m}^{4}}{m^{3}}+C \frac{\norm{\nabla m}^{2}}{m}+ C m,
\end{align*}
and
\[ 
\frac{\norm{\nabla m}^{2}}{m} \leq C_1 (t+1)- C_0 \log m,
\]
so that in particular
\[
\left(\partial_{t}-\mathcal{L}\right) \frac{\norm{\nabla m}^{2}}{m} \leq -2 \frac{\norm{\nabla^{2} m}^{2}}{m}-C_{0} \frac{\norm{\nabla m}^{2} \log m}{m}+C(t+1) \frac{\norm{\nabla m}^{2}}{m}+ C m.
\]
Therefore, for a constant $c$ large enough, we obtain
\begin{equation*}
\left(\partial_{t}-\mathcal{L}\right) \left[ \frac{\norm{\nabla^{2} m}^{2}}{m}+c  \frac{\norm{\nabla m}^{2}}{m} \right] \leq -c C_{0} \frac{\norm{\nabla m}^{2} \log m}{m}+C(t+1) \frac{\norm{\nabla m}^{2}}{m}+ C m.
\end{equation*}
Notice now that, for any positive constant $c$,
\begin{align*} 
\left(\partial_{t}-\mathcal{L} \right) \left(- m \left(\log m \right)^{2} +c(1+t) \ m \log m \right)  &= 2 \frac{\norm{\nabla m}^{2}}{m} \log m+ \left(2-c(1+t)\right) \frac{\norm{\nabla m}^{2}}{m}
\\ & \quad+\left( \nabla \cdot b+\cp{c} \right) m \log m+c(1+t)\nabla\cdot b  m,
\end{align*}
and that, since $\nabla \cdot b$ is bounded, for $c$ large enough $\nabla \cdot b+c \geq 0$. Using moreover the fact that $\log m$ is upper bounded by a constant, we get, for $c$ large enough,
\begin{equation*}
\left(\partial_{t}-\mathcal{L} \right) \left[-m \left(  \log m\right)^{2}+c(1+t)  m \log m \right] \leq 2 \log m \frac{\norm{\nabla m}^{2}}{m}+\left(2-c(1+t)\right) \frac{\norm{\nabla m}^{2}}{m}+(t+1) C m.
\end{equation*}
Lastly, notice that
\begin{align*}
    \left(\partial_{t}-\mathcal{L} \right) \left((1+t)^2 m\right) = 2(1+t) m,
\end{align*}
so that there exists $C_{3},C_{4},C_5,C_6$ (where $C_4$ may be enlarged, enlarging also $C_5$ and $C_6$), such that 
\begin{align*}
\left(\partial_{t}-\mathcal{L}\right) \left[\frac{\norm{\nabla^{2} m}^{2}}{m}+C_{3}  \frac{\norm{\nabla m}^{2}}{m}-C_{4} m \left(\log m \right)^{2} +C_{5} (t+1) m \log m-C_{6} (t+1)^{2} m \right] \leq 0.
\end{align*}
As in the first step, $c_{i,2}$ may be enlarged if needed in \eqref{hyp:gassian tail II}, so the constants $C_{3},C_{4},C_5,C_6$ may be chosen so that
\[
\frac{\norm{\nabla^{2} m_0}^{2}}{m_0}+C_{3}  \frac{\norm{\nabla m_0}^{2}}{m_0}-C_{4} m_0 \left(\log m \right)^{2} +C_{5} (t+1) m_0 \log m_0-C_{6} (t+1)^{2} m_0\leq 0. 
\]
Relying again on a maximum principle we deduce that for $t\in [0,T]$,
\begin{align}\label{eq:max principle case 2}
\frac{\norm{\nabla^{2} m}^{2}}{m} & \leq C_{6} (t+1)^{2} m- C_{5}(1+ t)m \log m + C_4 m (\log m)^2,
\end{align}
and, recalling \eqref{eq:gaussian bounds parametrix},
\[
\frac{\norm{\nabla^{2} m}^{2}}{m^2}(v) \leq C\left(1+\norm{v}^4\right).
\]

\medskip
{\it Third step: bound on $\norm{\nabla^2 m}$.}
With a very similar approach to the previous cases (by separating the terms involving the laplacian and the interaction), we obtain
\begin{align}
\nonumber \left(\partial_{t}-\mathcal{L}\right) \frac{\norm{\nabla^{3} m}^{2}}{f} & \leq -2 \sum_{i,j,k,l} m \left( \frac{\partial^{4}_{ijkl} m}{m}-\frac{\partial^{3}_{ijk} m \partial_{l} m}{m^{2}}\right)^{2}\\
&\quad +C\left(\frac{\norm{\nabla^{3} m}^{2}}{m}+\frac{\norm{\nabla^{2} m}^{2}}{m}+\frac{\norm{\nabla m}^{2}}{m}+m\right)
\nonumber\\ & \leq C\left(\frac{\norm{\nabla^{3} m}^{2}}{m}+\frac{\norm{\nabla^{2} m}^{2}}{m}+\frac{\norm{\nabla m}^{2}}{m}+m\right). \label{eq_grad3}
\end{align}
Similarly to the previous case, we rely on the dissipation term produced by $\frac{\norm{\nabla^{2} m}^{2}}{m}$:
\begin{align*}
\left(\partial_{t}-\mathcal{L}\right) \frac{\norm{\nabla^{2} m}^{2}}{m}  & \leq -2 \sum_{i,j,k} m \left(\frac{\partial^{3}_{ijk} m}{m}-\frac{\partial^{2}_{ij} m \partial_{k} m}{m^{2}} \right)^{2}+ C\frac{\norm{\nabla^{2} m}^{2}}{m}+C \frac{\norm{\nabla m}^{2}}{m}+C m
\\ &
\leq -2 \sum_{i,j,k} \frac{\left( \partial^{3}_{ijk} m \right)^{2}}{m}+C \frac{\norm{\nabla^2 m}^2\norm{\nabla m}^2}{m^3}+C\frac{\norm{\nabla^{2} m}^{2}}{m}+C \frac{\norm{\nabla m}^{2}}{m}+C m.
\end{align*}
Relying on \eqref{eq:max principle case 2} we then deduce
\begin{align*}
\left(\partial_{t}-\mathcal{L}\right) \frac{\norm{\nabla^{2} m}^{2}}{m} & \leq 
-2 \sum_{i,j,k} \frac{\left( \partial^{3}_{ijk} m \right)^{2}}{m}-C_{5} (t+1)\frac{\norm{ \nabla m}^2}{m} \log m
\\ &
\qquad 
 +C\frac{\norm{\nabla^{2} m}^{2}}{m}+C(t+1)^{2} \frac{\norm{\nabla m}^{2}}{m}+C m+C_{4} \frac{\norm{\nabla m}^{2}}{m} \left(\log m \right)^{2}.
\end{align*}
Notice now that for positive constants $c$ and $c'$, with $c'$ taken large enough,
\begin{align*}
\left(\partial_{t}-\mathcal{L} \right) &\left[-c(t+1) m \left( \log m \right)^{2}+c' \frac{(t+1)^{2}}{2} m \log m \right] \\
&=
-c m \left(\log m \right)^{2}+2c(t+1) \frac{\norm{\nabla m}^{2}}{m} \log m+2c (t+1) \frac{\norm{\nabla m}^{2}}{m}
 -c'\frac{(t+1)^{2}}{2} \frac{\norm{\nabla m}^{2}}{m} \\
&\quad +c' \frac{(t+1)^{2}}{2}  m+t \left( \nabla \cdot b+c'\right) m \log m
\\ &
\leq 2c(t+1) \frac{\norm{\nabla m}^{2}}{m} \log m+2c(t+1) \frac{\norm{\nabla m}^{2}}{m}+c' \frac{\left(t+1\right)^{2}}{2} m-cm \left( \log m \right)^{2},
\end{align*}
so that, choosing $c=C_5/2$, there exists a constant $C_7$ such that
\begin{align*}
\left( \partial_{t}-\mathcal{L} \right)& \left[\frac{\norm{\nabla^{2} m}^{2}}{m}-\frac{C_5}{2} (t+1) m \left( \log m \right)^{2}+C_7 \frac{(t+1)^{2}}{2} m \log m \right] \\
& \leq -2 \frac{\norm{\nabla^{3} m}^{2}}{m}
 +C (t+1)^{2} \frac{\norm{\nabla m}^{2}}{m}
+C (t+1)^{2} m- \frac{C_5}{2}  m \left( \log m \right)^{2}+C \frac{\norm{\nabla m}^{2}}{m} \left( \log m \right)^{2}.
\end{align*}
The only term that poses a difficulty here is the last one. We can take care of it by evaluating
\begin{align*}
\left(\partial_{t}-\mathcal{L} \right) \left[ m \left( \log m \right)^{3} \right]=3 \nabla \cdot b m \left(\log m \right)^{2}
-3 \frac{\norm{\nabla m}^{2}}{m}  \left(\log m \right)^{2}-6 \frac{\norm{\nabla m}^{2}}{m} \log m.
\end{align*}
There exists $C_{8}$ such that 
\begin{align*}
\left(\partial_{t}-\mathcal{L} \right)& \Bigg[\frac{\norm{\nabla^{2} m}^{2}}{m}-\frac{C_{5}}{2} (t+1) m \left( \log m \right)^{2}+C_{7} \frac{(t+1)^{2}}{2} m \log m +C_{8} m \left( \log m\right)^{3} \Bigg] \\
&\leq -2 \frac{\norm{\nabla^{3} m}^{2}}{m}+C(t+1)^{2} \frac{\norm{\nabla m}^{2}}{m}
+C(t+1)^{2} m+\left(3 \nabla \cdot b-\frac{C_{5}}{2} \right) m \left(\log m \right)^{2}\\
&\quad -C \frac{\norm{\nabla m}^{2}}{m} \log m.
\end{align*}
Note moreover that the value of the constant $C_{5}$ defined before may be enlarged such that $3 \nabla \cdot b-\frac{C_{5}}{2} \leq 0$.
The term $\frac{\norm{\nabla^{3}m}^{2}}{m}$ in \eqref{eq_grad3} can then be dealt with, and proceeding as before we can find positive constants $C_{9}, C_{10}, C_{11}, C_{12},C_{13}, C_{14}$ such that 
\begin{multline*}
\left(\partial_{t}-\mathcal{L} \right) \Bigg[\frac{\norm{\nabla^{3} m}^{2}}{m}+ C_{9}\frac{\norm{\nabla^{2} m}^{2}}{m}+C_{10}\frac{\norm{\nabla m}^{2}}{m}+C_{11} (t+1)^{2} m\log m
 \\ -C_{12} (t+1) m \left(\log m\right)^{2}+C_{13} m \left( \log m \right)^{3}-C_{14} (t+1)^{3} m\Bigg]  \leq 0.
\end{multline*}
We can then conclude the proof of Lemma \ref{lemme_grad} by a maximum principle as in the previous steps.

\end{proof}

\begin{proof}[Proof of Proposition~\ref{prop:Ik...Nk}]
The Gaussian tail hypothesis for the initial condition \eqref{hyp:gaussian tail I} ensures (see \cite[Chapter 9]{friedman2008partial}) that for $t\in [0,T]$, $i=1,2$ and $k=1,\ldots,N$,
\begin{equation}
 m^k_{i,t}(v) \leq C e^{-c\norm{v}^2}.
\end{equation}
Together with Lemma~\ref{lemme_grad} this implies directly that 
\[
\max_{k=1,\ldots,N} \sup_{t\in [0,T]} (M^k_t+N^k_t+O^k_t)<\infty.
\]
Due to Lemma~\ref{lem:hierarchy I J K} and Lemma~\ref{lem:link Lk Jk Nk} it remains to prove that $I^N_t$, $J_t^N$ and $K_t^N$ are finite.
Since the proofs are very similar, let us focus on $J^{N}$ and $K^{N}$ only. We have
\begin{align*}
J^{N}& =\int m_{2}^{N} \frac{\norm{\nabla^{2} f_{N}}^{2}}{f_{N}}
\leq C \int m_{1}^{N} \norm{\nabla^{2} \log f_{N}}^{2}+C\int m_{1}^{N} \norm{ \nabla \log f_{N}}^{4}.
\end{align*}
Using the decomposition $\log f_{N}=\log m_{1}^{N}-\log m_{2}^{N}$ and lemma~\ref{lemme_grad}, we get:
\begin{align*}
J^{N} \leq C+C \int m_{1}^{N}(dv) \norm{v}^{4} \leq C.
\end{align*}
The proof for $K^{N}$ is similar, relying on the decomposition
\begin{align*}
\frac{\partial^{3}_{i,j,k} f_N}{f_N}&=\partial^{3}_{i,j,k} \log f_N+\partial_{i} \log f_N \partial^{2}_{j,k} \log f_N+\partial_{j} \log f_N \partial^{2}_{i,k} \log f_N+\partial_{k} \log f_N \partial^{2}_{i,j} \log f_N\\
&\quad +\partial_{i} \log f_N \partial_{j} \log f_N \partial_{k} \log f_N.
\end{align*}
\end{proof}

\subsection{Proof of Proposition~\ref{prop:derivatives b1}}

Let us focus on the first bound of Proposition~\ref{prop:derivatives b1}. The other bounds can be obtained in a similar way. We have
\begin{align*}
    \int m^k_2 \nabla_{v_i} f_k \cdot \nabla_{v_i} \left(\nabla_{v_j} \cdot b^{j,k}_1\right) &= \frac{1}{N}\sum_{l=1}^k \int m^k_2 \nabla_{v_i} f_k \cdot \nabla_{v_i} \left(\nabla_{v_j} \cdot \Gamma \right)(v_j-v_l)\\
    &\quad + \frac{N-k}{N}\int m^k_2 \nabla_{v_i} f_k \cdot \nabla_{v_i} \left(\nabla_{v_j} \cdot \left(\Gamma(v_j-v_{k+1}) m_1^{k+1|k}\right)\right).
    \\ &
    \quad+ \int m_{2}^{k} \nabla_{v_{i}} f_{k} \cdot \nabla_{v_{i}} \left(\nabla \cdot F(v_{j}) \right).
\end{align*}
Since the derivatives of $\Gamma$ are bounded we directly have
\begin{equation*}
    \left|\int m^k_2 \nabla_{v_i} f_k \cdot \nabla_{v_i} \left(\nabla_{v_j} \cdot \Gamma \right)(v_j-v_l)\right| \leq C I^k.
\end{equation*}
The third term is only non zero if $i=j$, and we have, in that case, because the derivatives of F are bounded:
\begin{align*}
 \left|\int m_{2}^{k} \nabla_{v_{i}} f_{k} \nabla \left( \nabla \cdot F \right)(v_{i}) \right| & \leq C  \int m_{2}^{k} \norm{\nabla_{v_{i}} f_{k}} 
 \\ &
 \leq C I_{t}^{k}+C
\end{align*}
For the second term, we rely on the following decomposition:
\begin{align*}
    \int m^k_2 \nabla_{v_i} f_k \cdot \nabla_{v_i} \left(\nabla_{v_j} \cdot \left(\Gamma(v_j-v_{k+1}) m_1^{k+1|k}\right)\right) &
    =\int m^k_2 \nabla_{v_i} f_k \cdot \nabla_{v_i} \left(\nabla_{v_j} \cdot \Gamma\right)(v_j-v_{k+1})\,  m_1^{k+1|k}\\
    &\quad + \int m^k_2 \nabla_{v_i} f_k \cdot \nabla_{v_i}   m_1^{k+1|k} \left(\nabla_{v_j} \cdot \Gamma\right) (v_j-v_{k+1}) \\
    &\quad + \int m^k_2 \nabla_{v_i} f_k \cdot \nabla_{v_i}  \Gamma (v_j-v_{k+1}) \nabla_{v_j}   m_1^{k+1|k}\\
    &\quad + \int m^k_2 \nabla_{v_i} f_k \cdot  \nabla^2_{v_i,v_j}   m_1^{k+1|k}  \Gamma (v_j-v_{k+1}).
\end{align*}
We treat each of the terms of the right-hand side separately. For the first one, we have
\begin{align*}
\left|\int m^k_2 \nabla_{v_i} f_k \cdot \nabla_{v_i} \left(\nabla_{v_j} \cdot \Gamma\right)(v_j-v_{k+1})\,  m_1^{k+1|k}\right| \leq C\int m^k_2 \Vert \nabla_{v_i} f_k \Vert \int  m_1^{k+1|k}\leq C I^k.
\end{align*}
For the second one, we get
\begin{align*}
    &\left|\int m^k_2 \nabla_{v_i} f_k \cdot \nabla_{v_i}   m_1^{k+1|k} \left(\nabla_{v_j} \cdot \Gamma\right) (v_j-v_{k+1}) \right|\\
    &\qquad= \left|\int m^{k+1}_1 \nabla_{v_i} \log f_k \cdot \nabla_{v_i}  \log m_1^{k+1|k}\left(\nabla_{v_j} \cdot \Gamma\right) (v_j-v_{k+1}) \right|\\
    &\qquad \leq C I^k + C\int m^{k+1}_1 \left\Vert \nabla_{v_i} \log m_1^{k+1|k}\right\Vert^2\\
    &\qquad \leq C I^k + C I^{k+1}.
\end{align*}
The third term can be treated in the same way, and for the last one we have
\begin{align*}
    &\left|\int m^k_2 \nabla_{v_i} f_k \cdot  \nabla^2_{v_i,v_j}   m_1^{k+1|k}  \Gamma (v_j-v_{k+1})\right|\\
    &\qquad = \left|\int m^{k+1}_1 \nabla_{v_i}\log f_k \cdot \left(\nabla^2_{v_i,v_j} \log m_1^{k+1|k} +\nabla_{v_i}\log m_1^{k+1|k} \otimes \nabla_{v_j}\log m_1^{k+1|k}\right)\Gamma (v_j-v_{k+1})\right|\\
    &\qquad \leq C I^k +C\int m^{k+1}_1 \left\Vert \nabla^2 \log m_1^{k+1|k}\right\Vert^2+C\int m^{k+1}_1 \left\Vert \nabla \log m_1^{k+1|k}\right\Vert^4,
\end{align*}
and relying on the fact that $\log m_1^{k+1|k}= \log f_{k+1}-\log f_k+\log m^{k+1}_2-\log m^k_2$ we get
\begin{equation*}
    \int m^{k+1}_1 \left\Vert \nabla \log m_1^{k+1|k}\right\Vert^4\leq C\left( L^k+ L^{k+1} +  M^k +  M^{k+1}\right),
\end{equation*}
and 
\begin{equation*}
    \int m^{k+1}_1 \left\Vert \nabla^2 \log m_1^{k+1|k}\right\Vert^2\leq C\left(J^k + J^{k+1} + L^k + L^{k+1}+ N^k +  N^{k+1}\right).
\end{equation*}
This concludes the proof for the first bound of Proposition~\ref{prop:derivatives b1}, recalling Lemma~\ref{lem:hierarchy I J K}.

\subsection{Proof of Proposition~\ref{LemDev2}}

Under our assumptions, we know that $b_{2,t}$ (recall \eqref{def:b2}) satisfies $b_{2,t}=\Gamma \ast \bar\mu_{t}+F=\nabla U_t$, with all derivatives of $\nabla U_t$ bounded.
Therefore, $\bar{\mu}_{t}$ satisfies the equation
\begin{align*} \partial_{t} \bar\mu_{t}& =\text{Tr}\left( \nabla^{2} \bar\mu_{t} \right)-\nabla \cdot \left(b_{2,t} \bar \mu_{t} \right)
\\ &
=\text{Tr}\left( \nabla^{2} \bar\mu_{t} \right)-\left(\nabla \cdot b_{2,t}\right) \ \bar \mu_{t}-\nabla U_{t} \cdot \nabla \bar \mu_{t}
\\ &
=\text{Tr}\left( \nabla^{2} \bar\mu_{t} \right)-V \bar \mu_{t}-\nabla U_{t} \cdot \nabla \bar \mu_{t},
\end{align*}
where $\nabla U_{t}=\Gamma \ast \bar\mu_{t}+F$ and $V=\nabla \cdot \left( \Gamma \ast \bar \mu_{t}+F\right)$
This means that $\bar\mu_{t}$ satisfies the same equation as $\psi_{t}$ in section 2 of \cite{bdd_hess}, so $-\log \bar\mu_{T-t}$ satisfies the same equation as $\phi_{t}$. Using the same notations as \cite{bdd_hess}, we have $h=-\log \bar\mu_{0}$, so that $\nabla^{2} h$ is bounded (and thus $\nabla h$ is uniformly lipschitz). We also know that $\nabla U= \Gamma \ast \bar\mu_{t}+F$ is uniformly lipschitz. Notice moreover that, because our coefficients may depend on time, we have to consider a generalized version of the operator $\mathcal{A}_{U}$, so we need to check that $\nabla \left[\mathcal{A}_{U}+V\right]=\nabla \left[ \frac{1}{2} | \nabla U |^{2}-\frac{1}{2} \Delta U+V \right]-\nabla \left(\partial_{t} U \right)$ is uniformly lipschitz. Because $\Gamma$ and all the derivatives of $\Gamma $ and $F$ are uniformly bounded, $\nabla \left[ \frac{1}{2} | \nabla U |^{2}-\frac{1}{2} \Delta U+V \right]$ is uniformly lipschitz. Moreover, we have
\begin{align*} \nabla \left(\partial_{t} U \right) & =\partial_{t} \nabla U 
\\ &
=\Gamma \ast \partial_{t} \bar\mu_{t}+\partial_{t} F
\\ &
=\Gamma \ast \Delta \bar\mu_{t}-\Gamma \ast \left( \nabla \cdot \left[ \Gamma \ast \bar\mu_{t} \ \bar\mu_{t} \right] \right)
\\ &
=\Delta \Gamma \ast \bar\mu_{t}-\nabla \cdot \Gamma \ast \left(\Gamma \ast \bar\mu_{t} \ \bar\mu_{t} \right)
\end{align*}
Because $\Gamma$ and its derivatives are bounded, this identity entails that $\nabla \partial_{t} U$ is uniformly lipschitz.
We also have, because $\mathcal{A}+V$ is bounded, that  \[\kappa_{\mathcal{A}_{t}+V_{t}}=\frac{1}{r^{2}} \sup_{|x-y|=r} \langle \nabla \mathcal{A}_{t}(x)+V_{t}(x)-\mathcal{A}_{t}(y)-V_{t}(y),x-y\rangle \geq -\frac{C}{r} \]
According to the first point of Section 2.2, we can take $f_{t}(r)=-\frac{r}{t+\alpha}+\frac{\beta_{1}}{2} (t+\alpha)-\frac{\beta_{2}}{t+\alpha}$ for a suitable choice of $\beta_{1}, \beta_{2}$ and $\alpha$ with which we can apply Corollay 2.5 of \cite{bdd_hess} with $\sigma=\text{Id}$. This entails that $\nabla^{2} \log \mu_{T-t}$ is bounded for all $t,T$. This concludes the proof of Lemma 2.5.

\medskip

\noindent{\bf Acknowledgments.}\\
This work has been (partially) supported by the Project CONVIVIALITY ANR-23-CE40-0003 of the French National Research Agency. AG has benefited from a government grant managed by the Agence Nationale de la
Recherche under the France 2030 investment plan ANR-23-EXMA-0001.

\nocite{*}
\bibliographystyle{abbrv}
\bibliography{mabiblio}

\end{document}